\documentclass[12pt]{amsart}
\usepackage{amsmath,amssymb,amsfonts,amsthm,amsopn}
\usepackage{graphicx,tikz}
\usepackage[all]{xy}
\usepackage{amsmath} 
\usepackage{multirow, longtable, makecell, caption, array,enumitem}
\usepackage{amssymb}
\usepackage{mathtools}
\usepackage{bookmark}
\usepackage{hyperref}
\setlength{\textwidth}{\paperwidth}
\addtolength{\textwidth}{-2.5in}
\calclayout
\newtheorem{theorem}{Theorem}[section]
\newtheorem{lemma}[theorem]{Lemma}

\newtheorem{proposition}[theorem]{Proposition}

\usepackage{color}

\newtheorem{example}[theorem]{Example}
\newtheorem{remark}[theorem]{Remark}

\theoremstyle{definition}

\usepackage{cellspace} %
\setlength\cellspacetoplimit{5pt}
\setlength\cellspacebottomlimit{5pt}

\setcounter{MaxMatrixCols}{10}

\def\bR{\mathbb{R}}

\def\bN{\mathbb{N}}

\def\cF{\mathcal{F}}
\def\cS{\mathcal{S}}

\def\rd{\bR^d}

\def\rdd{\bR^{2d}}

\def\la{\langle}
\def\ra{\rangle}
\def\lc{\left(}
\def\rc{\right)}

\def\*b{*_{\bullet}}
\def\w{\mathrm{w}}

\def\S0{S^0_{0,0}}

\def\Bd'{B_{\delta'}}

\def\cBd'{\bar{B}_{\delta'}}

\begin{document}
	
\title[Phase space analysis of the Hermite semigroup]{Phase space analysis of the Hermite semigroup and applications to nonlinear global well-posedness}

\author[D. G. Bhimani]{Divyang G. Bhimani}
\address{TIFR Centre for Applicable Mathematics, Bangalore 560065, India}
\email{divyang@tifrbng.res.in}

\author[R. Manna]{Ramesh Manna}
\address{Department of Mathematics, Indian Institute of Science, 560 012 Bangalore, India}
\email{rameshmanna@iisc.ac.in}

\author[F. Nicola]{Fabio Nicola}
\address{Dipartimento di Scienze Matematiche ``G. L. Lagrange'', Politecnico di Torino, corso Duca degli Abruzzi 24, 10129 Torino, Italy}
\email{fabio.nicola@polito.it}

\author[S. Thangavelu]{Sundaram Thangavelu}
\address{Department of Mathematics, Indian Institute of Science, 560 012 Bangalore, India}
\email{veluma@iisc.ac.in}

\author[S. I. Trapasso]{S. Ivan Trapasso}
\address{Dipartimento di Scienze Matematiche ``G. L. Lagrange'', Politecnico di Torino, corso Duca degli Abruzzi 24, 10129 Torino, Italy}
\email{salvatore.trapasso@polito.it}

\subjclass[2010]{35K05, 42B35, 35S05}
\keywords{Hermite operator, heat semigroup, modulation spaces, pseudodifferential operators, nonlinear heat equation}
%
\begin{abstract}
We study the Hermite operator $H=-\Delta+|x|^2$ in $\rd$ and its fractional powers $H^\beta$, $\beta>0$ in phase space. Namely, we represent functions $f$ via the so-called short-time Fourier, alias Fourier-Wigner  or Bargmann transform $V_g f$ ($g$ being a fixed window function), and we measure their regularity and decay by means of mixed Lebesgue norms in phase space of $V_g f$, that is in terms of membership to modulation spaces $M^{p,q}$, $0< p,q\leq \infty$. We prove the complete range of fixed-time estimates for the semigroup $e^{-tH^\beta}$ when acting on $M^{p,q}$, for every $0< p,q\leq \infty$, exhibiting the optimal global-in-time decay as well as phase-space smoothing. \par

As an application, we establish global well-posedness for the nonlinear heat equation for $H^{\beta}$ with power-type nonlinearity (focusing or defocusing), with small initial data in modulation spaces or in Wiener amalgam spaces. We show that such a global solution exhibits the same optimal decay $e^{-c t}$ as the solution of the corresponding linear equation, where $c=d^\beta$ is the bottom of the spectrum of $H^\beta$. This is in sharp contrast to what happens for the nonlinear focusing heat equation without potential, where blow-up in finite time always occurs for (even small) constant initial data - hence in $M^{\infty,1}$.
\end{abstract}
\maketitle
\section{Introduction and discussion of the results}
The heat semigroup  $e^{t\Delta}$  associated to standard Laplacian has been studied by many authors in PDEs and physics. In fact, the study of the heat semigroup pervades throughout mathematical analysis and physics, being indispensable in many situations. Moreover, there has been increasing interest in applications of the fractional Laplacian to the mathematical modelling of various physical phenomena, see e.g.\ \cite{ng, NL} and the references therein. The Hermite operator (also known as quantum harmonic oscillator) $H=-\Delta+|x|^2$ plays a vital role in quantum mechanics and analysis \cite{folland, st}. Nevertheless, there are only few mathematical papers which deal with fractional powers of Hermite operator $H^{\beta} \ (\beta>0),$ see e.g.\ \cite{bhimani1,EC,st1}. 

 The  spectral decomposition  of $H$ on $\mathbb R^d$ is given by 
\begin{equation}\label{eq spec dec}
H=\sum_{k=0}^{+\infty}(2k+d)P_k,\qquad P_k f=\sum_{|\alpha|=k}\langle f,\Phi_\alpha\rangle\Phi_\alpha,
\end{equation}
where $\langle\cdot,\cdot\rangle$ is the inner product in $L^2(\rd)$ and $\Phi_\alpha$, $\alpha\in \mathbb{N}^{d}$, are the normalised Hermite functions, forming an orthonormal basis for  $L^2(\rd)$. Observe the regularising effect of $P_k$, which takes temperate distributions into Schwartz functions: $P_k:\cS'(\rd)\to\cS(\rd)$. Since 0 is not in the spectrum of $ H $ we can define the fractional powers $ H^\beta$ for any $ \beta \in \mathbb{R} $ by means of the spectral theorem, namely
$$  H^\beta f = \sum_{k=0}^\infty (2k+d)^\beta P_kf.$$
We remark  that $ H^\beta $ is a densely defined unbounded operator for $ \beta > 0 $. We thus define the heat semigroup associated to $H^{\beta}$  $(\beta>0)$ by 
\begin{equation}\label{eq heat semi}
e^{-tH^\beta}f=\sum_{k=0}^{+\infty} e^{-t(2k+d)^\beta} P_kf.
\end{equation}
In this note we study the behaviour of this semigroup on modulation spaces. In order to define these spaces, we recall the   definition  of short-time Fourier transform (STFT - also known as the Bargmann transform \cite{tataru}) of $f\in \cS'(\rd)$ with respect to a fixed window function $ g\in \mathcal{S}(\mathbb R^d)\setminus\{0\}$:
\[
V_g f(x,\xi) \coloneqq \int_{\rd} e^{-i \xi \cdot y}f(y) \, \overline{g(y-x)} \, dy,\quad x,\xi\in\rd. 
\]
We then measure the phase-space content of $f$ by means of mixed Lebesgue (quasi-)norms $L^{p,q}(\rd \times \rd)$ of $V_g f$, leading to the so-called modulation spaces $ M^{p,q}$ \cite{F1,gro book,wang}:
\[
\|f\|_{M^{p,q}}:=\| \|V_g f(x,\xi)\|_{L^p_x}\|_{L^q_\xi},
\]
with $0<p,q\leq\infty$; see Section \ref{PR} for a more general definition involving weights. Heuristically, one can think of a function in $M^{p,q}$ as having the local regularity of a function whose Fourier transform is in $L^q$ and decaying at infinity as a function in $L^p$. We have, in particular, $M^{2,2}=L^2$. Modulation spaces can be equivalently designed as a family of Besov-type spaces with the dyadic geometry in frequency replaced by a decomposition in isometric boxes.  \par
We now state our main result, concerning the action of $e^{-tH^\beta}$ on such spaces. 
\begin{theorem}\label{mainthm} Let $\beta>0$, $0< p_1,p_2,q_1,q_2\leq\infty$ and 
\[
 \frac{1}{\tilde{p}}\coloneqq \max\Big\{\frac{1}{p_2}-\frac{1}{p_1},0\Big\},\ \ \frac{1}{\tilde{q}}\coloneqq \max\Big\{\frac{1}{q_2}-\frac{1}{q_1},0\Big\},\quad \sigma\coloneqq  \frac{d}{2\beta} \Big(\frac{1}{\tilde{p}}+\frac{1}{\tilde{q}}\Big).
 \] Then
\begin{equation}\label{eq mainthm}
\|e^{-tH^\beta} f\|_{M^{p_2,q_2}}\leq C(t)\|f\|_{M^{p_1,q_1}}
\end{equation}
for every $t> 0$, where
\begin{equation}\label{optimalconstant}
C(t)=\begin{cases}
C_0e^{-td^\beta} & t\geq 1\\
C_0t^{-\sigma}& 0<t\leq 1
\end{cases}
\end{equation}
for some $C_0>0$.
\end{theorem}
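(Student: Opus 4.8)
The plan is to reduce the estimate to a convolution-type bound at the level of the short-time Fourier transform and then to exploit the explicit spectral structure of $e^{-tH^\beta}$ together with decay estimates for the heat kernel of the (fractional) harmonic oscillator. First I would recall that modulation space norms can be discretized, so that $\|f\|_{M^{p,q}}$ is comparable to a mixed $\ell^{p,q}$ norm of the sequence $\big(\|V_gf\cdot\chi_{Q_n}\|_{L^\infty}\big)_{n\in\zdd}$, where $\{Q_n\}$ is a covering of phase space by unit cubes; equivalently one uses the Gabor-frame characterization. The operator $e^{-tH^\beta}$ is best understood through its integral kernel: for $\beta=1$ this is the Mehler kernel, whose Gaussian-type decay in both variables is classical, and for general $\beta>0$ one writes $e^{-tH^\beta}=\int_0^\infty e^{-sH}\,d\mu_{t,\beta}(s)$ via the subordination formula (Bochner), where $d\mu_{t,\beta}$ is a probability measure on $(0,\infty)$, thereby transferring kernel bounds from the Mehler semigroup.

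\textbf{Key steps.} (i) Establish the short-time bound $C(t)\sim t^{-\sigma}$ for $0<t\le1$: here the relevant mechanism is phase-space smoothing, and the natural tool is the known mapping property of $e^{-tH}$ on modulation spaces for \emph{finite} time, or equivalently a direct estimate on $V_g(e^{-tH^\beta}f)$ using that $e^{-tH^\beta}$ is a pseudodifferential-type operator whose symbol lies in a nice class; the power $\sigma=\frac{d}{2\beta}(1/\tilde p+1/\tilde q)$ is exactly the loss one expects from the ``heat-flow'' gain of $s^{1/2}$ derivatives balanced against the subordination weight $s\sim t^{1/\beta}$, and one tracks this through interpolation between the endpoint cases $(1/\tilde p,1/\tilde q)\in\{(0,0),(1,0),(0,1),(1,1)\}$. (ii) Establish the long-time bound $C(t)\sim e^{-td^\beta}$ for $t\ge1$: the spectral gap gives $\|e^{-tH^\beta}P_kf\|\le e^{-t(2k+d)^\beta}\|P_kf\|$, and since $P_k$ maps $\cS'\to\cS$ with at most polynomial growth in $k$ in every Schwartz seminorm (hence in every $M^{p,q}$ norm by embeddings), summing $\sum_k e^{-t(2k+d)^\beta}\|P_kf\|_{M^{p_2,q_2}}$ against $e^{+d^\beta}\cdot e^{-td^\beta}$ and absorbing the polynomial factors of $k$ into the super-exponential decay (for $t\ge1$) yields the claim with $f\in M^{p_1,q_1}\hookrightarrow$ a suitable weighted space. (iii) Glue the two regimes at $t=1$, noting that the constants match up to a universal factor.

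\textbf{Main obstacle.} The delicate point is the \emph{sharp} short-time exponent $\sigma$ together with the full range $0<p,q\le\infty$ including the quasi-Banach cases $p,q<1$, where interpolation is subtler and one cannot freely invoke duality. I expect the core technical work to be a precise kernel estimate: writing $K_t^\beta(x,y)$ for the kernel of $e^{-tH^\beta}$, one needs $V_g\big(e^{-tH^\beta}f\big)(z,\zeta)$ controlled by $\big(H_t * |V_gf|\big)(z,\zeta)$ for an $L^1$-normalized phase-space kernel $H_t$ whose $L^r$ norms scale like $t^{-\frac{d}{2\beta}(1/\tilde p+1/\tilde q)}$ in the appropriate bookkeeping — this is where the subordination measure's behavior near $s=0$ (which governs small $t$) must be matched against the off-diagonal Gaussian decay of the Mehler kernel. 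Once this convolution-dominated bound is in place, Young's inequality (in its quasi-Banach form on $\ell^{p,q}$ sequence spaces over $\zdd$) closes the argument in all cases simultaneously, and the long-time decay drops out from the same kernel bound by instead using the exponential factor $e^{-ts}$ in the subordination integral against the bottom of the spectrum.
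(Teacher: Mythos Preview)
Your long-time argument ($t\ge 1$) is essentially identical to the paper's: both use the spectral decomposition, the fact that $\|P_k\|_{M^{p_1,q_1}\to M^{p_2,q_2}}$ grows at most polynomially in $k$ (via the embeddings $M^{p_1,q_1}\hookrightarrow Q^{-s}$ and $Q^s\hookrightarrow M^{p_2,q_2}$ through the Shubin--Sobolev scale), and then sum $\sum_k e^{-t(2k+d)^\beta}(2k+d)^s\lesssim e^{-td^\beta}$ for $t\ge 1$.

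For the short-time regime there is a genuine gap. You propose to write $e^{-tH^\beta}=\int_0^\infty e^{-sH}\,d\mu_{t,\beta}(s)$ via Bochner subordination with $\mu_{t,\beta}$ a probability measure. This identity is available only for $0<\beta\le 1$: for $\beta>1$ the function $\lambda\mapsto e^{-t\lambda^\beta}$ is not completely monotone and no positive subordinator exists, so your reduction to the Mehler kernel collapses precisely in the range where the theorem is most interesting (and where no explicit kernel formula is available). You would still need an independent argument for $\beta>1$, and your sketch does not supply one.

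The paper's route for $0<t\le 1$ avoids subordination entirely and is worth contrasting with your convolution/Young scheme. Rather than seeking a pointwise domination $|V_g(e^{-tH^\beta}f)|\lesssim H_t*|V_gf|$, the paper uses that $H^\beta$ has Weyl symbol in the Shubin class $\Gamma^{2\beta}$ (valid for all $\beta>0$ via the complex-powers machinery), and then invokes the heat-parametrix construction in that calculus: $e^{-tH^\beta}=b_t^{\mathrm w}$ with $t^Nb_t$ bounded in $\Gamma^{-2\beta N}$ uniformly for $t\in[0,1]$. A weighted Calder\'on--Vaillancourt theorem (symbols in $\Gamma^{-s}$ map $M^{p,q}\to M^{p,q}_{v_s}$, including the quasi-Banach range via Toft's results) then yields
\[
\big\|(1+t^N m^N)\,V_g(e^{-tH^\beta}f)\big\|_{L^{p_1,q_1}}\le C\|f\|_{M^{p_1,q_1}},\qquad m(x,\xi)=(1+|x|+|\xi|)^{2\beta},
\]
and a single application of H\"older's inequality against the weight $(1+t^Nm^N)^{-1}$ converts the gain of decay into the gain of integrability, producing the sharp exponent $\sigma$ after a rescaling. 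This is both shorter and more robust than tracking $L^r$-norms of a phase-space convolution kernel through interpolation and a quasi-Banach Young inequality --- and, crucially, it treats all $\beta>0$ uniformly.
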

To the best of our knowledge the result is new even in the case $\beta=1$, $p_1=q_1=p_2=q_2$. \par
Let us give a flavour of the heuristics behind the behaviour of $ C(t).$ First, by testing the above estimate on the ground state of $H$ we see that the decay at infinity is absolutely sharp for every choice of the exponents and also that the exponent of $t$ for $t$ small can never be positive. Indeed, if  $ f = \Phi_0$ then $P_0f=f$ and  $P_k f=0$ for $k\not=0$, so that  $e^{-tH^\beta} f=e^{-td^\beta} f$.

As the modulation spaces $ M^{p,q} $ increase when one of the exponents increases while the other one is kept fixed, it is sufficient to prove the result with $p_2$ replaced by $\min\{p_1,p_2\}$ and similarly for $q_2$, namely we can assume $p_2\leq p_1$, $q_2\leq q_1$. Now, in somewhat sloppy terms, the effect of the map $e^{-t H^\beta}$ in phase space is to damp the content of a function in a way that roughly amounts to the multiplication by the function 
\[
F_t(x,\xi)=e^{-t(|x|^2+|\xi|^2)^\beta}.
\] The operator norm of this pointwise multiplication operator, as a map $L^{p_1,q_1}\to L^{p_2,q_2}$ (mixed-norm Lebesgue spaces in $\rd\times \rd$), can be computed by H\"older's inequality, namely $\|F_t\|_{L^{\tilde{p},\tilde{q}}}=C(t)$, as given in the statement ($0<t\leq1$). Of course the negative exponent of $t$, for $t$ small, is the price to pay for the {\it phase-space smoothing}, when passing from $p_1$ to $p_2<p_1$ or from $q_1$ to $q_2<q_1$. \par
We emphasize that the exponents are just assumed to be positive, without further conditions, similarly to the refined estimates for the heat semigroup in Besov spaces \cite{wang240} (see also \cite[Sec.\ 2.2]{wang}) and in real Hardy spaces \cite{chen}.  \par
We also observe that the same estimates hold  when $M^{p,q}$ is  replaced by the so-called Wiener amalgam space $W^{p,q}\coloneqq \mathcal{F} M^{p,q}$, i.e.\ the image of $M^{p,q}$ under the Fourier transform, endowed with the obvious norm. This follows at once by applying the above estimates to $\mathcal{F}^{-1} f$ and using the fact that $\mathcal{F}^{-1}$ commutes with the spectral projections $P_k$ (because $\mathcal{F}^{-1}\Phi_\alpha=i^{|\alpha|}\Phi_\alpha$) and therefore with $e^{-t H^\beta}$ by \eqref{eq heat semi}.

Similar estimates for the fractional heat semigroup $e^{-t(-\Delta)^\beta}$ were obtained in \cite[Thm.\  3.1]{chen}; see also \cite{wang} for related results. However, our analysis follows a completely different pattern and is necessarily less elementary, because the operator $e^{-tH^\beta}$ is not a Fourier multiplier. Actually, except for the case $\beta=1$, there is not even an explicit integral formula for $e^{-tH^\beta}$ and we rely on the theory of fractional powers and heat kernels of globally elliptic pseudodifferential operators in $\rd$ (see e.g.\ \cite{NR,shubin} for a general account).\par
From the above fixed-time estimates one could deduce Strichartz (space-time) estimates by a standard machinery, via the $TT^\ast$ method or real interpolation (see e.g.\ \cite{tao} and \cite[Thm.\  3.1]{chen} respectively; see also \cite{pierf}). However, in the case of the Hermite semigroup one should be able to obtain a broader range of space-time estimates beyond those derived by the fixed-time estimates, see for instance \cite[Cor.\ 2.1]{wang} for the heat semigroup. Hence we prefer to postpone a systematic study of Strichartz estimates, including some interesting related topics - in fact, it seems that the techniques of this paper could be successfully applied to obtain some new $L^p$ estimates as well. Instead, here we focus on some direct applications of Theorem \ref{mainthm} to the heat equation for $H^\beta$ with a nonlinearity of power type, providing some results which are definitely a consequence of the trapping effect of the quadratic potential in $H$ and do not hold for the corresponding heat equation without potential. 

Specifically, we  consider the Cauchy problem for the nonlinear heat equation\footnote{The subsequent arguments and results can be trivially modified for other algebraic nonlinearities such as $\lambda u^k$,
	$\lambda\in\mathbb{C}$, $k\in\mathbb{N}$, $k\geq 2$.} associated to $H^{\beta}:$
\begin{equation} \label{HE}
\begin{cases} \partial_t u+H^\beta u=\lambda |u|^{2k} u\\
u(0,x)=u_0(x)
\end{cases}
\end{equation}
with $(t,x)\in(0,+\infty)\times\rd$, where $k\in \mathbb{N}$, $k\not=0$, $\lambda\in\mathbb{C}$ and $\beta>0$. \par
As an application of Theorem \ref{mainthm}, we   establish global well-posedness for \eqref{HE}. Specifically, we  have following result.
\begin{theorem}\label{mt2}	
Let $ 1\leq p \leq \infty$, $1\leq q \leq \frac{2k+1}{2k}$ and 
\[
\frac{1}{q}+\frac{\beta}{kd}>1.
\]
  Define the subspace $X\subset L^\infty([0,+\infty),M^{p,q})$ of elements $u$ satisfying  
\[\|u\|_{X}\coloneqq  \left\| e^{td^{\beta}}  \|u(t, \cdot)\|_{M^{p,q}} \right\|_{L_t^{\infty}([0, \infty))}< \infty .\]
\begin{itemize}
\item[(a)] There exists $\varepsilon>0$ such that for any $u_0\in M^{p,q}$  satisfying  $\|u_0\|_{M^{p,q}} \leq  \varepsilon,$  the problem \eqref{HE} has
	 a unique global solution	 
	 \[u \in L^{\infty}([0, \infty), M^{p,q}). \]
\item[(b)] We have $u \in C([0,\infty), M^{p,q})$  if $p<\infty$. \par\noindent
\item[(c)] If $\varepsilon$ is small enough the above solution enjoys exponential decay in time; specifically,   $u \in X.$	 \par\noindent
\item[(d)] The same results hold with $M^{p,q}$ replaced by $W^{q,p}$, for the same range of exponents. 
\end{itemize}
\end{theorem}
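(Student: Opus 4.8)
The plan is to solve \eqref{HE} by a contraction mapping argument for the Duhamel (mild) formulation
\[
u(t)=\Phi(u)(t):=e^{-tH^\beta}u_0+\lambda\int_0^t e^{-(t-s)H^\beta}\big(|u(s)|^{2k}u(s)\big)\,ds,
\]
carried out inside a small closed ball of $L^\infty([0,\infty),M^{p,q})$ for part (a), of the weighted space $X$ for part (c), and of $X\cap C([0,\infty),M^{p,q})$ for part (b); part (d) will then follow by conjugating the whole construction with the Fourier transform.

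Two ingredients are needed. The first is a \emph{nonlinear estimate}: writing $|u|^{2k}u=u^{k+1}\overline u^{\,k}$ and using that the $M^{p,q}$-norm is invariant under complex conjugation, the $(2k+1)$-linear multiplicative estimate for modulation spaces (H\"older in the space variable, Young in the frequency variable; cf.\ Section \ref{PR}) gives
\[
\big\||u|^{2k}u\big\|_{M^{p_\ast,q_\ast}}\lesssim \|u\|_{M^{p,q}}^{2k+1},\qquad \frac1{p_\ast}=\frac{2k+1}{p},\qquad \frac1{q_\ast}=\frac{2k+1}{q}-2k,
\]
where $q_\ast>0$ precisely because $q\le\frac{2k+1}{2k}$, and where $p_\ast\le p$, $q_\ast\ge q$; a telescoping identity writing $|u|^{2k}u-|v|^{2k}v$ as a sum of degree-$(2k+1)$ monomials in $u,\overline u,v,\overline v$, each carrying exactly one factor $u-v$ or $\overline{u-v}$, upgrades this to the Lipschitz bound $\big\||u|^{2k}u-|v|^{2k}v\big\|_{M^{p_\ast,q_\ast}}\lesssim(\|u\|_{M^{p,q}}^{2k}+\|v\|_{M^{p,q}}^{2k})\|u-v\|_{M^{p,q}}$. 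The second ingredient is the \emph{linear estimate}: by Theorem \ref{mainthm} with $(p_1,q_1)=(p_\ast,q_\ast)$ and $(p_2,q_2)=(p,q)$ one has $\tilde p=\infty$ and $1/\tilde q=1/q-1/q_\ast=2k(1-1/q)$, hence
\[
\sigma=\frac{d}{2\beta}\,\frac1{\tilde q}=\frac{kd}{\beta}\Big(1-\frac1q\Big)=\frac{kd(q-1)}{\beta q},
\]
so that the hypothesis $\frac1q+\frac\beta{kd}>1$ is \emph{exactly} the condition $\sigma<1$; together with the exponential gain in \eqref{optimalconstant} for $t\ge1$ this yields the key global-in-time bound $\int_0^\infty C(\tau)\,d\tau<\infty$. (For $p_1=p_2$, $q_1=q_2$ Theorem \ref{mainthm} gives $\sigma=0$, so $e^{-tH^\beta}$ is bounded on $M^{p,q}$ with norm $\lesssim e^{-td^\beta}$.)

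For (a) these two ingredients give $\|\Phi(u)\|_{L^\infty_tM^{p,q}}\lesssim\|u_0\|_{M^{p,q}}+\|u\|_{L^\infty_tM^{p,q}}^{2k+1}$ together with a matching Lipschitz bound on $\Phi(u)-\Phi(v)$, so for $\varepsilon$ small $\Phi$ contracts on a small ball of $L^\infty([0,\infty),M^{p,q})$ and produces the solution there; uniqueness in the whole space $L^\infty([0,\infty),M^{p,q})$ follows from a short-interval fixed-point argument (using $\int_0^\delta C\to0$ as $\delta\to0^+$, with a step size that is uniform in time) propagated across $[0,\infty)$. For (c), one checks that
\[
I(t):=e^{td^\beta}\int_0^t C(t-s)\,e^{-(2k+1)sd^\beta}\,ds
\]
is bounded uniformly in $t>0$ — change variables $\tau=t-s$, split $0<\tau\le1$ and $\tau\ge1$, and use $\sigma<1$ on the first piece and the exponential decay on the second — so the same computation performed in $X$ gives $\|\Phi(u)\|_X\lesssim\|u_0\|_{M^{p,q}}+\|u\|_X^{2k+1}$ with a matching Lipschitz bound; for $\varepsilon$ smaller, $\Phi$ contracts on a small ball of $X$, and its fixed point, lying inside the $L^\infty_tM^{p,q}$-ball of (a), coincides by uniqueness with the solution of (a), which is therefore in $X$. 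For (b), when $p<\infty$ (hence $p,q<\infty$) the space $\cS$ is dense in $M^{p,q}$ and $e^{-tH^\beta}$ is strongly continuous there; since multiplication into $M^{p_\ast,q_\ast}$ and the Duhamel integral preserve time continuity (the tail $\int_{t-\delta}^t$ is small, the bulk continuous by strong continuity and dominated convergence), $\Phi$ maps $X\cap C([0,\infty),M^{p,q})$ into itself and the fixed point may be taken there. Finally, part (d) is obtained by conjugating with $\mathcal F$: it commutes with each $P_k$, hence with $e^{-tH^\beta}$; one has $W^{q,p}=\mathcal F M^{q,p}$; the linear estimates transfer verbatim (as observed after Theorem \ref{mainthm}); and, since $\mathcal F^{-1}$ turns products into convolutions, the multiplicative estimate above is replaced by the \emph{convolutive} estimate for modulation spaces (Young in the space variable, H\"older in the frequency variable), for which the restriction $q\le\frac{2k+1}{2k}$ now constrains the first index — which is precisely why $W^{q,p}$, and not $W^{p,q}$, is the natural space — while the threshold $\frac1q+\frac\beta{kd}>1$ is unchanged. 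I expect the only genuinely delicate step to be the second one: pinning down the exact target space $M^{p_\ast,q_\ast}$ of the nonlinearity via the H\"older/Young structure of modulation spaces and then reading off from Theorem \ref{mainthm} that the resulting phase-space smoothing exponent $\sigma$ is $<1$ \emph{exactly} under the stated hypothesis; once this alignment is secured, the global integrability $\int_0^\infty C<\infty$ and the uniform bound on $I(t)$ make the contraction, and the self-improvement to exponential decay, essentially routine.
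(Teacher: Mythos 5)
Your proposal is correct and follows essentially the same route as the paper: Duhamel formulation, the $(2k+1)$-linear modulation-space estimate, the fixed-time bounds of Theorem \ref{mainthm} with smoothing exponent $\sigma=\frac{kd}{\beta}\bigl(1-\frac1q\bigr)<1$ under the stated hypothesis, a contraction in a small ball of $L^\infty_t M^{p,q}$ (and of the weighted space $X$ with the same $E_1/E_2$ splitting in time), strong continuity by density of $\mathcal{S}$ for part (b), and Fourier conjugation for the Wiener amalgam case (d). The only differences are cosmetic — you invoke Theorem \ref{mainthm} with $(p_1,q_1)=(p_\ast,q_\ast)$ and $\tilde p=\infty$ where the paper first embeds $M^{p_\ast,q_\ast}\hookrightarrow M^{p,q_\ast}$, and you sketch (rather than carry out, as the paper does via the Hermite seminorms $\|H^kf\|_{L^2}$) the strong continuity of $e^{-tH^\beta}$ on Schwartz data — so the argument matches the paper's proof in substance.
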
 
The following remarks are in order (see Example \ref{ex} for details). 

\begin{remark}\label{fe}
Let $f_\alpha(x)=|x|^{-\alpha}$, $0\leq\alpha<d$, $x\in\rd$.
\begin{itemize}
\item[(i)]
The hypothesis  $1\leq q\leq \frac{2k+1}{2k}$ in Theorem \ref{mt2} is natural.  Indeed, $f_\alpha\in M^{\infty,q}\subset W^{q,\infty}$, $q=\frac{2k+1}{2k}$, if $0\leq \alpha<\frac{d}{2k+1}$. But in the limiting case $\alpha=\frac{d}{2k+1}$ we 
have $|f_\alpha(x)|^{2k} f_\alpha(x)=|x|^{-d}$, which is not even locally integrable. Notice that when $\beta>\frac{kd}{2k+1}$, under the assumption $1\leq q\leq\frac{2k+1}{2k}$ the additional condition $\frac{1}{q}+\frac{\beta}{kd}>1$ is automatically satisfied.
\item[(ii)] To give a flavour of the the type of singularities and oscillations at infinity admitted for the initial data, we observe that, for example, one can take as initial datum $u_0(x)=\varepsilon f_\alpha(x)(1+c\cos |\xi|^2)$, $c\in\mathbb{C}$, $0\leq\alpha<\min\{d/(2k+1),\beta/k\}$, with $\varepsilon$ small enough. Also, if $\chi$ is any smooth function with compact support in $\rd$ and $\Lambda$ is any lattice in $\rd$, one can consider $u_0(x)=\varepsilon\sum_{\mu \in\Lambda} f_\alpha(x-\mu)\chi(x-\mu)$, with the same restrictions on $\alpha$ and $\varepsilon$. Observe that of course $f_\alpha\not\in L^p$ for every $p$ (if $\alpha\not=0$), thus Theorem \ref{mt2} reveals that we can control initial data beyond $L^p$ (there is an enormous literature on nonlinear heat equations  with Cauchy data in $L^p$; see e.g.\ \cite{ms, vas} and the references therein).
\item[(iii)] The results in Theorem \ref{mt2} look interesting because they are in sharp contrast to what happens for the standard heat equation with the above nonlinearity and $\lambda=1$ (focusing case), where for real constant initial data (hence in $M^{\infty,1}$), even small, one has always blow-up in finite time for every $k\not=0$, as one sees at once by solving the ordinary differential equation $u_t=u^{k+1}$, $u$ real (again the literature in this connection is large; see  \cite{vas} for a comprehensive survey).
\item[(iv)] In Theorem \ref{mt2} we suppose $p,q\geq 1$. In fact, we are interested in well-posedness in the lowest regularity/biggest spaces; moreover, the proof relies on the Minkowski integral inequality. However, the problem of the persistence of regularity in quasi-Banach spaces, as well as weighted variants of the above results, although not of primary interest, could be worth investigating.
\end{itemize}

\end{remark}

The proof of Theorem \ref{mt2} can be adapted (in fact simplified) to prove local well-posedness for \eqref{HE} in the same spaces without any smallness assumption on the initial data. We leave the details to the interested reader and we limit ourselves to briefly state the result as follows 
(with the necessary clarifications in the case $p=\infty$, as in Theorem \ref{mt2}).
\begin{theorem}\label{LW}
Let $ 1\leq p \leq \infty$,  $ 1\leq q \leq \frac{2k+1}{2k}$ and $\frac{1}{q}+\frac{\beta}{kd}>1.$  Then \eqref{HE}  is locally well-posed in $M^{p,q}$ and $W^{q, p}$.
\end{theorem}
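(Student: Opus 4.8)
The plan is a contraction-mapping argument on the Duhamel map $\Phi$ defined by
\[
\Phi(u)(t)\coloneqq e^{-tH^\beta}u_0+\lambda\int_0^t e^{-(t-s)H^\beta}\big(|u|^{2k}u\big)(s)\,ds,
\]
so that \eqref{HE} is equivalent to the fixed point problem $u=\Phi(u)$, which I would solve in the ball $Y_{T,R}\coloneqq\{u\in C([0,T],M^{p,q}):\ \|u\|_{L^\infty_tM^{p,q}}\le R\}$ (with $C$ replaced by $L^\infty$ and the attendant caveat when $p=\infty$, exactly as in Theorem~\ref{mt2}) for $T\le 1$, $R$ comparable to $\|u_0\|_{M^{p,q}}$, and $T$ small depending only on $R$. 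Apart from Theorem~\ref{mainthm}, the one analytic ingredient is the $(2k+1)$-linear product estimate in modulation spaces
\[
\Big\|\prod_{j=1}^{2k+1}u_j\Big\|_{M^{r,b}}\lesssim\prod_{j=1}^{2k+1}\|u_j\|_{M^{p,q}},\qquad \frac1r=\frac{2k+1}{p},\quad \frac1b=\frac{2k+1}{q}-2k,
\]
obtained by iterating the bilinear estimate $\|fh\|_{M^{r_0,b_0}}\lesssim\|f\|_{M^{p,q}}\|h\|_{M^{p,q}}$, $1/r_0=2/p$, $1+1/b_0=2/q$ (Hölder in $x$ and Young in $\xi$, using $V_g(fh)(x,\cdot)=V_{g_1}f(x,\cdot)\ast V_{g_2}h(x,\cdot)$). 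The hypotheses $1\le q\le\frac{2k+1}{2k}$ are precisely what keeps every intermediate exponent, and in particular $b$, inside $[1,\infty]$, so this estimate is meaningful and $M^{r,b}$ is a (quasi-)Banach modulation space.

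Next I would record the two mapping bounds. For the linear term, Theorem~\ref{mainthm} with $p_1=p_2=p$, $q_1=q_2=q$ gives $\sigma=0$, hence $\|e^{-tH^\beta}f\|_{M^{p,q}}\le C_0\|f\|_{M^{p,q}}$ for all $t>0$; together with strong continuity of $\{e^{-tH^\beta}\}$ on $M^{p,q}$ for $p<\infty$ (which follows from $\|e^{-tH^\beta}\phi-\phi\|_{M^{p,q}}\to0$ for $\phi\in\mathcal{S}$ — dense since $p,q<\infty$ — and the uniform bound), this controls the first term of $\Phi$. For the Duhamel term, I feed $(|u|^{2k}u)(s)\in M^{r,b}$ into $e^{-(t-s)H^\beta}\colon M^{r,b}\to M^{p,q}$ and apply Theorem~\ref{mainthm} with $p_1=r$, $q_1=b$, $p_2=p$, $q_2=q$. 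Since $r\le p$ one gets $1/\tilde p=0$, while $1/\tilde q=\max\{1/q-1/b,0\}=2k(1-1/q)$, so for $0<t-s\le 1$ the constant in \eqref{eq mainthm} is $C(t-s)\le C_0(t-s)^{-\sigma}$ with $\sigma=\frac{d}{2\beta}\cdot 2k(1-\tfrac1q)=\frac{kd}{\beta}(1-\tfrac1q)$, and the hypothesis $\frac1q+\frac{\beta}{kd}>1$ is exactly the statement $\sigma<1$. Using the Minkowski integral inequality (valid because $M^{p,q}$ is Banach for $p,q\ge1$) and then the product estimate,
\[
\Big\|\int_0^t e^{-(t-s)H^\beta}(|u|^{2k}u)(s)\,ds\Big\|_{M^{p,q}}\lesssim\int_0^t(t-s)^{-\sigma}\,ds\cdot\|u\|_{L^\infty_tM^{p,q}}^{2k+1}\lesssim\frac{T^{1-\sigma}}{1-\sigma}R^{2k+1},
\]
so choosing $R=2C_0\|u_0\|_{M^{p,q}}$ and then $T=T(R)$ small makes $\Phi$ map $Y_{T,R}$ into itself, with no smallness of $u_0$ required.

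For contraction I would write $|u|^{2k}u-|v|^{2k}v$ as a finite telescoping sum of $(2k+1)$-fold products in which one factor is $u-v$ or $\overline{u-v}$ and the remaining $2k$ factors lie in $\{u,\bar u,v,\bar v\}$ (using $\|\bar w\|_{M^{p,q}}\approx\|w\|_{M^{p,q}}$); the same product estimate followed by $e^{-(t-s)H^\beta}\colon M^{r,b}\to M^{p,q}$ yields
\[
\|\Phi(u)-\Phi(v)\|_{L^\infty_TM^{p,q}}\lesssim\frac{T^{1-\sigma}}{1-\sigma}\big(\|u\|_{L^\infty_TM^{p,q}}^{2k}+\|v\|_{L^\infty_TM^{p,q}}^{2k}\big)\|u-v\|_{L^\infty_TM^{p,q}},
\]
so $\Phi$ is a contraction on $Y_{T,R}$ after shrinking $T$. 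Banach's fixed point theorem gives a unique solution in $Y_{T,R}$; uniqueness in the full class $C([0,T],M^{p,q})$ (resp.\ $L^\infty$), continuous dependence on $u_0$, and — for $p<\infty$ — continuity in time follow from the same difference estimate and density arguments, exactly as in the proof of Theorem~\ref{mt2}. The $W^{q,p}$ case is identical: by the remark following Theorem~\ref{mainthm} the fixed-time bounds hold verbatim on Wiener amalgam spaces, and the analogous multilinear product estimate is available there as well.

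The argument is routine once Theorem~\ref{mainthm} is granted, which is presumably why the authors leave the details to the reader; the only points needing a little care are: making precise the sense in which $|u|^{2k}u$ and the solution are defined when $p=\infty$, where $\mathcal{S}$ is not dense in $M^{\infty,q}$ (resolved by extending the bounded $(2k+1)$-linear map and adopting the $p=\infty$ conventions of Theorem~\ref{mt2}); the continuity-in-time caveat for $p=\infty$; and the bookkeeping of the detour $M^{p,q}\to M^{r,b}\to M^{p,q}$ through a possibly quasi-Banach intermediate space $M^{r,b}$ — harmless, since the product step is multilinear (no integration there) and the Minkowski inequality is invoked only in the Banach space $M^{p,q}$. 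The conceptual crux is simply the bookkeeping identity $\sigma=\frac{kd}{\beta}(1-\tfrac1q)$, which shows that the precise smoothing budget of the Hermite semigroup matches the time-integrability threshold $\sigma<1$ exactly when $\frac1q+\frac{\beta}{kd}>1$.
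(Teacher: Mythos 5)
Your proposal is correct and follows essentially the same route the paper intends: it is the proof of Theorem \ref{mt2} adapted to a short time interval (contraction via Duhamel, the multilinear estimate of Lemma \ref{Algebra}/Proposition \ref{gap}, and Theorem \ref{mainthm} with $\sigma=\frac{kd}{\beta}\bigl(1-\frac1q\bigr)<1$ exactly when $\frac1q+\frac{\beta}{kd}>1$), trading smallness of the data for smallness of $T$. Your treatment of the $p=\infty$ caveat, the quasi-Banach intermediate space, and the Wiener amalgam case matches the paper's conventions, so there is nothing substantive to add.
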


We observe that this result implies, in particular, local well-posedness in $M^{p,1}$, $1\leq p\leq\infty$. Actually this special case, when $0<\beta\leq 1$, follows directly from \cite[Thm.\  1.1]{nicola} - it was also re-obtained in \cite{EC} (in fact, if $\beta\leq 1$, the operator $H^\beta$ is a pseudodifferentual operator with a real-valued Weyl symbol, bounded from below and with bounded derivatives of order $\geq 2$, therefore satisfying the assumptions in \cite[Thm.\ 1.1]{nicola}; see Proposition \ref{prop shubin} below). However  Theorem \ref{LW} applies to every $\beta>0$  and for a range of $q>1$, allowing more singular initial data (in contrast, $M^{\infty,1}$ contains only continuous functions).\par
Modulation spaces have been widely applied in the study of nonlinear PDEs. The local and global well-posedness  for the  heat equation associated to   Laplacian  in weighted modulation spaces  goes back to the  work of Iwabuchi  \cite{ib}. In \cite{FD}, authors have proved ill-posedness for the fractional heat equation  and in \cite[Thm.\ 1.1]{bhimani1}  finite time blow-up  has been established in some modulation spaces.  On the other hand, Bhimani et al.\ proved in \cite{bhimani3}   global well-posedness for the Hartree-Fock equations associated to harmonic oscillator in some modulation spaces; see also \cite{RM}.  There is a large literature dealing with the analysis of PDEs on modulation spaces; we refer to the surveys \cite{BOP,RSW} and the monograph \cite{wang}, and the references therein (see also \cite{BO,CNR}); we also mention the article \cite{kt} for results on the Hermite operator obtained using phase-space methods. However, the study of nonlinear {\it global} well-posedness in modulation and Wiener amalgam spaces is very limited when the corresponding linear propagator is not a Fourier multiplier and, in fact, new interesting phenomena can occur, as observed in Remark \ref{fe} {\it (iii)}.
\par\bigskip
In short, the paper is organised as follows. We collect some background material on modulation spaces and a number of preliminary results in Section 2. Section 3 is devoted to the proof of Theorem \ref{mainthm}, while in Section 4 we provide the proof of Theorem \ref{mt2}.


\section{Preliminary results}\label{PR}
We write $|x|^2 = x\cdot x$ for $x \in \rd$, where $x\cdot y$ is the inner product on $\rd$. We denote by $\cS(\rd)$ the Schwartz class of rapidly decaying smooth functions on $\rd$ and by $\cS'(\rd)$ the space of temperate distributions. The bracket $\la f,g\ra$ stands for the inner product of $f,g \in L^2(\rd)$ as well as for the action of $f \in \cS'(\rd)$ on $g \in \cS(\rd)$; in both cases we assume it to be conjugate-linear in the second entry. 

Given $x,\xi \in \rd$, the translation operator $T_x$ and the modulation operator $M_{\xi}$ are defined as 
\[ T_x f(y) \coloneqq f(y-x),\quad M_\xi f(y) \coloneqq e^{i \xi \cdot y} f(y), \quad f \in \cS(\rd), \]
and can be extended to temperate distributions by duality. The composition $\pi(z) = M_{\xi}T_x$, $z = (x,\xi) \in \rdd$, is referred to as a time-frequency shift. 

The short-time Fourier transform (STFT) of $f \in \cS'(\rd)$ with respect to a window function $g \in \cS(\rd)\setminus\{0\}$ is defined by
\[ V_g f(x,\xi) \coloneqq \la f,\pi(x,\xi)g \ra = \int_{\rd} e^{-i \xi \cdot y}f(y)\overline{g(y-x)}dy. \]

Modulation spaces were introduced by Feichtinger \cite{F1} in the '80s. They consist of functions enjoying suitable summability/decay conditions on the phase-space side. Consider a weight $m(x,\xi)$ in phase space, i.e.\ a continuous and strictly positive function in $\rd\times\rd$ with at most polynomial growth - in fact, we will often use the polynomial weight $v_s(x,\xi) \coloneqq (1+|x|+|\xi|)^s$, $(x,\xi) \in \rd\times\rd$, $s\in\mathbb{R}$ \footnote{Actually we need a further technical assumption which will be always verified in the following - hence the reader could ignore this issue - namely $m$ is required to be $v_s$-moderate for some $s\geq0$; see \cite{gro book}.}. Let $0< p, q \le \infty$ and $g \in \cS(\rd)\setminus\{0\}$; the modulation space $M^{p,q}_m(\rd)$ is the set of all $f\in \cS'(\rd)$ such that
\begin{equation}\label{modsp norm} \| f \|_{M^{p,q}_{m}} \coloneqq \| V_g f \, m \|_{L^{p,q}} = \lc \int_{\rd} \lc \int_{\rd} |V_g f(x,\xi) m(x,\xi)|^p dx \rc^{q/p} d\xi \rc^{1/q} < \infty, \end{equation}
with obvious modifications in the case where $p=  \infty$ or $q=\infty$. When $s = 0$ we simply write $M^{p,q}$. Here we used the notation $L^{p,q}(\rd \times \rd)$ for the mixed-norm Lebesgue spaces, with (quasi-)norm $\|F(x,\xi)\|_{L^{p,q}}= \|\|F(x,\xi)\|_{L^p_x}\|_{L^q_\xi}$.

It turns out that modulation spaces are quasi-Banach spaces (Banach spaces if $p,q\geq 1$) whose definition is independent of the choice of the window function $g$ - in the sense that different choices of the window provide equivalent norms; see \cite{CR,galperin,gro book} for proofs and further details. Here we observe that they have a number of relations with standard function spaces of harmonic analysis, the most notable being that $M^{2,2}(\rd) = L^2(\rd)$, and the so-called Shubin-Sobolev, alias Hermite-Sobolev, spaces $Q^s$, $s\in\mathbb{R}$ \cite{shubin}, which can be defined \cite[Thm.\ 2.1]{gramchev} as the space of $f\in\cS'(\rd)$ such that 
\begin{equation}\label{eq char shusob}
\|f\|^2_{Q^s}\coloneqq \|H^{s/2}f\|^2_{L^2}=\sum_{k=0}^{+\infty} ||P_k f||^2_{L^2}(2k+d)^{s}<\infty.
\end{equation}
It is well known that, for every $0<p,q\leq\infty$, if $s$ is large enough,
\begin{equation}\label{eq emb shusob}
Q^s\hookrightarrow M^{p,q}\hookrightarrow M^\infty\hookrightarrow Q^{-s}\end{equation} (this follows easily from the characterisation $Q^s=M^{2,2}_{v_s}$ \cite[Lem.\ 4.4.19]{CR}, H\"older's inequality and the inclusion relations \cite[Thm.\ 2.4.17]{CR}). 

We recall that reversing the order of integration in \eqref{modsp norm}, namely
\begin{equation}\label{was norm} \| f \|_{W^{p,q}} \coloneqq \lc \int_{\rd} \lc \int_{\rd} |V_g f(x,\xi)|^p d\xi \rc^{q/p} dx \rc^{1/q}, \end{equation} (here we take $m=1$ for simplicity) gives rise to a norm that characterizes the so-called Wiener amalgam spaces $W^{p,q}$. In fact, they are strictly related to modulation spaces via the Fourier transform, since $W^{p,q} = \cF M^{p,q}$.  We stress that such spaces can be equivalently characterized by decomposition methods as Wiener amalgams with local component $\cF L^p$ and global component $L^q$, that is $W^{p,q} = W(\cF L^p,L^q)(\rd)$ - see \cite{CR,F0} for further details.

\begin{example}\label{ex}
\begin{itemize}
\item[(i)]
Let $f_\alpha(x)=|x|^{-\alpha}$, $0<\alpha<d$ (cf.\ Remark \ref{fe}). Let us show that the function $f_\alpha(x)$ is in $M^{p,q}(\mathbb R^d)$ for  $p>d/\alpha$ and $q>d/(d-\alpha)$. We just sketch the proof, leaving the details to the interested reader. \par One can estimate separately the STFT of $\chi f_\alpha$ and $(1-\chi) f_\alpha$, where $\chi$ is smooth with compact support in $\mathbb{R}^d$, $\chi=1$ in a neighborhood of the origin, taking a window $g$ with compact support. Then $V_g (\chi f)(x,\xi)=0$ if $x$ is large enough and for $x$ in a compact subset one uses $|V_g (\chi f)(x,\xi)|\lesssim |\hat{f_\alpha}|\ast_\xi |\hat{\chi}| \ast_\xi |\hat{\overline{g}}|(\xi)=:F(\xi)$. Indeed, since $\hat{f_\alpha}=c_\alpha f_{d-\alpha}$ for some $c_\alpha\in\mathbb{R}$, by invoking the Hardy-Littlewood-Sobolev inequality we have $F\in L^q(\mathbb R^d)$ for $q> \frac{d}{d-\alpha}$. On the other hand,  $(1-\chi)f_\alpha$ can be estimated using the embedding $L^p_k \hookrightarrow M^{p,q}$ which holds for $1 < p < \infty$ and $k \in \bN$ large enough, where $L^p_k$ denotes the space of $L^p$ functions with $k$ distribution derivatives in $L^p$ \cite{kob}. 
\item[(ii)]
Since, as already observed, $\hat{f_\alpha}=c_\alpha f_{d-\alpha}$, we have $f_\alpha\in W^{q,p}(\mathbb R^d)$ for the same range of exponents $p,q$.
\item[(iii)] Note that $\cos |x| \in M^{\infty,1}$ \cite[Cor.\ 15]{bgor unimod} and by the algebra property (Proposition \ref{gap}) we have $M^{p,q}\cdot M^{\infty,1}\subset M^{p,q}$, $1\le p,q \le \infty$. Thus, if $f\in M^{p,q}$ then $ f(x) \cos |x|\in M^{p,q}$. Similarly, using that $\cos |x|^2 \in W^{1,\infty}$ \cite[Thm.\ 14]{bgor unimod} and  $W^{q,p}\cdot W^{1,\infty}\subset W^{q,p}$, $1 \le p,q \le \infty$, we see that if $f$ belongs to $W^{q,p}$ then $ f(x) \cos |x|^2$ belongs to $W^{q,p}$ too. 
\item[(iv)] Constant functions are in $M^{\infty,1}\subset W^{1,\infty}$. 
\item[(v)] If $\chi$ is any smooth function with compact support in $\rd$ and $\Lambda\subset\mathbb{R}^d$ is any lattice in $\rd$, $q>d/(d-\alpha)$, then $\sum_{\mu \in\Lambda} f_\alpha(x-\mu)\chi(x-\mu)$ belongs to $W^{q,\infty}$, as one verifies easily.
\end{itemize}

\end{example}

Modulation spaces can be used both as symbol classes as well as environment where to study boundedness of pseudodifferential operators \cite{CR}, i.e.\ operators formally given by  
\[
a^\w f(x)=(2\pi)^{-d}\int_{\rdd} e^{i\xi\cdot (x-y)} a\Big(\frac{x+y}{2},\xi\Big)f(y)\, dy d\xi
\]
where $a(x,\xi)$ is a function in phase space - the Weyl symbol of the operator $a^\w$. In fact the above integral, suitably interpreted in a weak sense, gives rise to a continuous operator $a^\w:\cS(\rd)\to \cS'(\rd)$ for any distribution symbol $a \in \cS'(\rdd)$. However, in the following we will only consider smooth symbols satisfying some growth conditions at infinity. Indeed, the relevant symbol classes in this paper are given by the so-called Shubin classes \cite{helffer,NR,shubin}: for $s\in\mathbb{R}$ we define $\Gamma^s$ as the space of functions $a\in C^\infty(\rdd)$ such that 
\[
|\partial^\alpha a(z)|\leq C_\alpha (1+|z|)^{s-|\alpha|}\qquad z\in\rdd
\]
for every $\alpha\in \mathbb{N}^{2d}$. This space becomes a Fr\'echet space when endowed with the obvious seminorms. \par


We state a generalized version of the Calder\'on-Vaillancourt theorem that will be used below.

\begin{theorem}\label{continuity}
Let $a\in \Gamma^{-s}$, $s\in\mathbb{R}$, and $0< p,q\leq \infty$. Then $a^\w: M^{p,q}\to M^{p,q}_{v_s}$ continuously, with operator norm depending only on a finite number of seminorms of $a$ in $\Gamma^{-s}$. 
\end{theorem}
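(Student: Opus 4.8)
The plan is to establish Theorem \ref{continuity} in two stages: first the classical Calder\'on--Vaillancourt boundedness on $M^{p,q}$ for $0<p,q\leq\infty$, then the gain of the weight $v_s$ by exploiting the decay $s$ of the symbol. For the first stage, I would pass to the Weyl symbol picture and rewrite the action of $a^\w$ on the short-time Fourier transform side. Concretely, fixing a window $g\in\cS(\rd)\smo$ with $\|g\|_{L^2}=1$, one has the well-known integral kernel formula
\[
V_g(a^\w f)(z) = \int_{\rdd} K(z,w)\, V_g f(w)\, dw, \qquad z,w\in\rdd,
\]
where the kernel $K(z,w)$ can be expressed in terms of the cross-Wigner (or cross-ambiguity) function of time-frequency shifts of $g$ and of the symbol $a$; when $a\in\Gamma^0$ this kernel enjoys rapid off-diagonal decay, i.e.\ for every $N$ there is $C_N$ with $|K(z,w)|\leq C_N \langle z-w\rangle^{-N}$, and $C_N$ depends only on finitely many $\Gamma^0$-seminorms of $a$ (this is essentially the content of Sj\"ostrand's class / the almost-diagonalization of pseudodifferential operators, see \cite{CR,gro book}). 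Once one has a kernel with such rapid decay in $z-w$, boundedness on the mixed-norm spaces $L^{p,q}(\rdd)$ — hence on $M^{p,q}$ — for all $0<p,q\leq\infty$ follows by a convolution-type estimate: in the Banach range $p,q\geq1$ it is immediate from Young's/Schur's inequality, and in the quasi-Banach range one uses the standard trick of replacing the kernel bound with an $\ell^r$-summable sequence over a lattice and invoking the quasi-norm inequality $\|\sum_j F_j\|_{L^{p,q}}^r \leq \sum_j \|F_j\|_{L^{p,q}}^r$ for $r=\min\{1,p,q\}$, together with translation-invariance of the $L^{p,q}$ quasi-norm. This is exactly the mechanism used for the sharp exponent range in \cite{CR}.

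For the second stage, the weight gain, the point is that $a\in\Gamma^{-s}$ decays like $\langle z\rangle^{-s}$, and I would like to absorb the weight $v_s(z)$ into the symbol. Write $v_s(z)\lesssim v_s(z-w)\, v_s(w)$ by submultiplicativity, so that
\[
v_s(z)\,|K(z,w)| \lesssim v_s(z-w)\,|K(z,w)|\, v_s(w).
\]
The extra polynomial factor $v_s(z-w)$ is harmless since $K$ has rapid decay in $z-w$; the remaining factor $v_s(w)$ multiplying $V_g f(w)$ is precisely the one turning $\|V_gf\|_{L^{p,q}}$ into $\|V_g f\cdot v_s\|_{L^{p,q}} = \|f\|_{M^{p,q}_{v_s}}$ — but this is the wrong direction if $s>0$. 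The correct route, therefore, is to factor $a^\w$ through $H^{s/2}$: since $H^{s/2}$ has Weyl symbol in $\Gamma^{s}$ (see Proposition \ref{prop shubin} below) and is invertible with $(H^{s/2})^{-1}=H^{-s/2}$ having symbol in $\Gamma^{-s}$, the composed symbol of $H^{s/2}\circ a^\w$ lies in $\Gamma^0$ by the symbol calculus for Shubin classes (see \cite{NR,shubin}); hence $H^{s/2} a^\w : M^{p,q}\to M^{p,q}$ is bounded by the first stage. On the other hand, $H^{s/2}:M^{p,q}_{v_s}\to M^{p,q}$ is an isomorphism — indeed $M^{p,q}_{v_s}$ is characterized as the space of $f$ with $H^{s/2}f\in M^{p,q}$, which for $s\geq0$ follows again from the $\Gamma^{\pm s}$-continuity (mapping $M^{p,q}_{v_{\pm s}}\to M^{p,q}$) and is the weighted analogue of \eqref{eq char shusob}. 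Composing, $a^\w = H^{-s/2}(H^{s/2}a^\w): M^{p,q}\to M^{p,q}_{v_s}$ is bounded, with operator norm controlled by finitely many $\Gamma^{-s}$-seminorms of $a$ (all constants entering the symbol calculus being continuous in those seminorms).

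The main obstacle I anticipate is the quasi-Banach range $0<p,q<1$: the almost-diagonalization and the subsequent kernel estimates are standard in the Banach setting, but in the quasi-Banach setting one must be careful that (i) the convolution-domination argument still closes — this forces one to take $N$ large enough that $\langle\cdot\rangle^{-N}\in\ell^r(\zdd)$ with $r=\min\{1,p,q\}$, which is always achievable by taking more $\Gamma^{-s}$-seminorms into account — and (ii) the symbol calculus (composition $H^{s/2}\circ a^\w$ landing in $\Gamma^0$) is a statement purely about symbols and is insensitive to $p,q$, so it transfers for free; the only genuinely $p,q$-dependent input is the boundedness of a rapidly-decaying-kernel operator on $L^{p,q}(\rdd)$, which is where the quasi-norm triangle inequality does the work. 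A secondary technical point is to make the dependence of all constants on a \emph{finite} number of seminorms explicit throughout: this is automatic because each step (almost-diagonalization, Schur-type estimate, symbol composition) produces bounds that are continuous functions of finitely many seminorms, and composing finitely many such steps preserves this property. With these points handled, the statement follows.
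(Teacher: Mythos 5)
Your Stage 1 (almost-diagonalization $|K(z,w)|\leq C_N\langle z-w\rangle^{-N}$ for $\Gamma^0$ symbols, then convolution domination on $L^{p,q}$ with the lattice/$\ell^r$ trick, $r=\min\{1,p,q\}$) is a sound route to the unweighted boundedness in the full quasi-Banach range. The genuine gap is in Stage 2: the factorization $a^\w=H^{-s/2}\,(H^{s/2}a^\w)$ requires the boundedness $H^{-s/2}\colon M^{p,q}\to M^{p,q}_{v_s}$, equivalently the lifting isomorphism $H^{s/2}\colon M^{p,q}_{v_s}\to M^{p,q}$, for all $0<p,q\leq\infty$. This is itself an instance of the weight-gaining mapping property you are trying to prove (a $\Gamma^{-s}$ operator sending $M^{p,q}$ into $M^{p,q}_{v_s}$), and your justification is circular: the ``$\Gamma^{\pm s}$-continuity $M^{p,q}_{v_{\pm s}}\to M^{p,q}$'' you invoke is not established by your Stage 1 (which treats only $\Gamma^0$ on unweighted spaces), and even granted, it yields only the inclusion $M^{p,q}_{v_s}\subset\{f: H^{s/2}f\in M^{p,q}\}$; the converse inclusion is exactly the statement at issue. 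The lifting property does hold, but in the literature it is a nontrivial theorem whose proofs go through weighted pseudodifferential estimates of precisely this kind, so it cannot be quoted as a free black box here, least of all for $p,q<1$.

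The fix is available inside your own framework and makes the factorization unnecessary: when $a\in\Gamma^{-s}$ (indeed, when merely $|\partial^\alpha a(z)|\leq C_\alpha(1+|z|)^{-s}$, with no decay of derivatives), the symbol decay passes to the phase-space kernel, giving $|K(z,w)|\leq C_N\langle z-w\rangle^{-N}(1+|z|)^{-s}$; hence $v_s(z)|K(z,w)|\leq C_N'\langle z-w\rangle^{-N}$ and your Stage 1 convolution argument applied to $v_s\,V_g(a^\w f)$ yields $a^\w\colon M^{p,q}\to M^{p,q}_{v_s}$ directly, with constants depending on finitely many seminorms. This is essentially what the paper does, in a shorter form: it cites Toft's result that $a^\w\colon M^{p,q}\to M^{p,q}_{v_s}$ is bounded whenever $a\in M^{\infty,r}_{\omega_0}(\rdd)$ with $\omega_0(z,w)=(1+|z|+|w|)^s$ and $r\leq\min\{1,p,q\}$, and then checks by writing $e^{-iw\cdot y}=(1+|w|^2)^{-N}(1-\Delta_y)^Ne^{-iw\cdot y}$ and integrating by parts that such symbols satisfy $|V_ga(z,w)|\leq C_N(1+|w|)^{-2N}(1+|z|)^{-s}$, so they lie in $M^{\infty,r}_{\omega_0}$ for every $r>0$ (which is how the quasi-Banach range is handled). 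A further minor point: Proposition \ref{prop shubin} as stated covers only $\beta>0$, so the claim that $H^{-s/2}$ has Weyl symbol in $\Gamma^{-s}$ would also need the complex-powers machinery for negative powers, another reason the direct kernel (or Toft) argument is preferable.
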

\begin{proof}
The desired continuity result follows from \cite[Thm.\ 3.1]{toft} specialised to the case of weights $\omega_1(x,\xi)= 1$,  $\omega_2(x,\xi)=(1+|x|+|\xi|)^s$ ($x,\xi\in\rd$), $\omega_0(z,w)= (1+|z|+|w|)^s$ ($z,w\in\rdd$), which gives the continuity of $a^\w: M^{p,q}\to M^{p,q}_{v_s}$ if $a\in M^{\infty,r}_{\omega_0}(\rdd)$ for $r\leq\min\{1,p,q\}$. \par
On the other hand, if a symbol $a$ satisfies the estimates
\[
|\partial^\alpha a(z)|  \leq C_\alpha (1+|z|)^{-s}\qquad z\in\rdd
\]
(no additional decay is needed for the derivatives), then it belongs to $M^{\infty,r}_{\omega_0}(\rdd)$ for any $r>0$. In fact, writing $e^{-iw\cdot y} =(1+|w|^2)^{-N}(1-\Delta_y)^N e^{-iw\cdot y}$ $(z,w,y\in\rdd)$ in the formula for the short-time Fourier transform of $a$, and repeated integration by parts yield
\[
|V_g a(z,w)|\leq C_N (1+|w|)^{-2N}(1+|z|)^{-s}\qquad z,w\in\rdd
\]
for every $N\in\bN$ ($g\in\cS(\rdd)$), which easily gives the claim. 
\end{proof}

To conclude this section we recall a result about complex powers of pseudodifferential operators. 

\begin{proposition}\label{prop shubin}
	Let $\beta >0$. The fractional Hermite operator $H^{\beta} = (-\Delta+|x|^2)^{\beta}$ is a pseudodifferential operator with Weyl symbol $a_{\beta} \in \Gamma^{2\beta}$. More precisely we have
	\begin{equation}\label{eq prop shubin}
	a_\beta(x,\xi) = (|x|^2+|\xi|^2)^\beta + r(x,\xi), \quad |x|+|\xi|\ge 1,
	\end{equation}  where $r \in \Gamma^{2\beta-2}$.
\end{proposition}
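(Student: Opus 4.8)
The plan is to combine the spectral-theoretic definition of $H^\beta$ with the symbolic calculus for Shubin-class pseudodifferential operators, exploiting that $H$ is a globally elliptic operator in the Shubin sense with Weyl symbol $a_1(x,\xi)=|x|^2+|\xi|^2$. First I would recall from the theory of complex powers of globally elliptic pseudodifferential operators (see \cite{NR,shubin,helffer}) that since $H$ has Weyl symbol in $\Gamma^2$, is formally self-adjoint, and is globally elliptic with spectrum contained in $[d,+\infty)$ (by \eqref{eq spec dec}, so that $0$ lies in the resolvent set), the complex power $H^\beta$ is again a pseudodifferential operator with Weyl symbol $a_\beta\in\Gamma^{2\beta}$; this is the standard statement that the classes $\Gamma^s$ form a filtered algebra under composition, closed under ellipticity-based parametrix constructions, and that complex powers stay within this scale, with the order multiplying by $\beta$. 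This gives the membership $a_\beta\in\Gamma^{2\beta}$ outright.

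Next I would extract the principal part. The symbolic calculus gives an asymptotic expansion $a_\beta \sim \sum_{j\ge 0} b_{\beta,j}$ with $b_{\beta,j}\in\Gamma^{2\beta-2j}$, where the leading term $b_{\beta,0}$ is obtained by taking the $\beta$-th power of the principal symbol: $b_{\beta,0}(x,\xi)=(|x|^2+|\xi|^2)^\beta$, at least for $|x|+|\xi|\ge 1$ where this is a smooth, positively homogeneous (of degree $2\beta$) elliptic symbol. The subleading terms $b_{\beta,j}$, $j\ge 1$, arise from the Moyal product corrections and from differentiating the homogeneous leading symbol; each such correction lowers the order by at least $2$, because the Weyl (Moyal) composition expands in even powers of the derivatives — more precisely each bracket term in the Moyal expansion carries a factor that drops the order by $2$. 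Hence the full remainder $r\coloneqq a_\beta - (|x|^2+|\xi|^2)^\beta$ satisfies, on the region $|x|+|\xi|\ge 1$, the estimates defining $\Gamma^{2\beta-2}$: one checks that $a_\beta$ and the explicit function $(|x|^2+|\xi|^2)^\beta$ agree up to order $2\beta-2$, so their difference and all its derivatives obey the required bounds $|\partial^\alpha r(z)|\le C_\alpha(1+|z|)^{2\beta-2-|\alpha|}$.

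The main obstacle — and the only genuinely non-elementary point — is to pin down that the complex power $H^\beta$ constructed via the spectral theorem coincides with the pseudodifferential operator produced by the symbolic parametrix construction, and that its symbol has precisely the claimed leading term. This is where one must invoke the Seeley-type theory of complex powers in the Shubin calculus: one writes $H^\beta = \frac{i}{2\pi}\int_\gamma z^\beta (H-z)^{-1}\,dz$ along a suitable contour $\gamma$ encircling the spectrum, constructs a parametrix for the resolvent $(H-z)^{-1}$ within the $\Gamma$-calculus with $z$-dependent symbol estimates (the parameter-dependent Shubin calculus), and integrates term by term to recover the symbol of $H^\beta$ with its asymptotic expansion, the leading term being exactly $(|x|^2+|\xi|^2)^\beta$ on $|x|+|\xi|\ge 1$. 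I would cite \cite{NR} (and \cite{shubin,helffer}) for this machinery rather than reproduce it, since it is standard; the cutoff to $|x|+|\xi|\ge 1$ merely avoids the singularity of $(|x|^2+|\xi|^2)^\beta$ at the origin, which is harmless because modifying a symbol on a compact set changes the operator only by a smoothing (in fact $\Gamma^{-\infty}$) operator, which is in particular in $\Gamma^{2\beta-2}$ and can be absorbed into $r$.
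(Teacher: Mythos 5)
Your proposal is correct and follows essentially the same route as the paper: both reduce the statement to the standard theory of complex powers of positive, globally elliptic operators in the Shubin calculus (citing \cite{helffer} and \cite{NR}), with the two-order gain of the remainder $r\in\Gamma^{2\beta-2}$ coming from the structure of the calculus (the Planck function $(1+|x|+|\xi|)^{-2}$, i.e.\ your observation that each step of the expansion drops the order by $2$). The paper simply leaves the Seeley-type resolvent/contour construction implicit in the cited theorems, whereas you sketch it; no substantive difference.
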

\begin{proof}
	The result follows from the machinery of complex powers applied to the operator $H$ (cf.\ \cite[Thm.\ 1.11.1, Thm.\ 1.11.2]{helffer}). Indeed $H$ is a positive operator with Weyl symbol $|x|^2+|\xi|^2$, which is positive globally elliptic - in the sense that $|x|^2+|\xi|^2 \ge C(1+|x|+|\xi|)^2$ for $|x|+|\xi|$ sufficiently large and some $C>0$. The desired result then follows e.g.\ from \cite[Thm.\ 4.3.6]{NR} specialized to the symbol class $S(M,\Phi,\Psi)=\Gamma^m$, namely with $M(x,\xi)=(1+|x|+|\xi|)^{m}$, $\Phi(x,\xi)=\Psi(x,\xi)=1+|x|+|\xi|$; the so-called Planck function $h(x,\xi)=\Phi(x,\xi)^{-1}\Psi(x,\xi)^{-1}=(1+|x|+|\xi|)^{-2}$, which gives the gain in the asymptotic expansions within this symbol class, is responsible for the gain in decay of the remainder $r(x,\xi)$ compared with the ``principal symbol'' $(|x|^2+|\xi|^2)^\beta$. 
\end{proof}

\section{Proof of the main result}
\begin{proof}[Proof of Theorem \ref{mainthm}]
We prove the desired estimate separately in the regimes $t\geq 1$ and $0< t\leq 1$. \par\medskip
{\bf Case $t\geq 1$.}
It is sufficient to prove the following estimates:
\begin{equation}\label{uno}
\|P_k\|_{M^{p_1,q_1}\to M^{p_2,q_2}}\leq C_0(2k+d)^{s},\qquad k\in\mathbb{N}
\end{equation}
for some $s\geq 0$ and $C_0>0$, and
\begin{equation}\label{due}
\sum_{k=0}^{+\infty} e^{-t(2k+d)^\beta} (2k+d)^{s} \leq C_1 e^{-td^\beta},\qquad t\geq 1
\end{equation} for some $C_1>0$.
Let us prove \eqref{uno}. 
As a consequence of the characterization \eqref{eq char shusob} and the embeddings in \eqref{eq emb shusob}, for $s$ large enough we have
\[
\|P_k\|_{M^{p_1,q_1}\to M^{p_2,q_2}}\leq C_0 \|P_k\|_{Q^{-s}\to Q^{s}}=C_0(2k+d)^{s}.
 \]
Let us now prove \eqref{due}. Since the sequence $\mathbb{N}\ni k\mapsto e^{-t(2k+d)^\beta}(2k+d)^{s}$ is decreasing for, say, $k\geq k_0$, we estimate separately
\[
\sum_{k=0}^{k_0} e^{-t(2k+d)^\beta} (2k+d)^{s} \leq C_2 e^{-td^\beta}
\]
for some $C_2>0$, and 
\begin{align*}
\sum_{k=k_0+1}^{+\infty} e^{-t(2k+d)^\beta}(2k+d)^{s}&\leq \int_{k_0}^{+\infty} e^{-t(2x+d)^\beta}(2x+d)^{s}\, dx\\
&\leq  \int_{0}^{+\infty} e^{-t(2x+d)^\beta}(2x+d)^{s}\, dx\\
 &=\frac{e^{-td^\beta}}{2\beta}\int_0^{+\infty} e^{-ty} (y+d^\beta)^{-1+1/\beta+s/\beta}\, dy,
\end{align*}
where we applied the change of variable $(2x+d)^\beta=y+d^\beta$. The latter integral is decreasing in $t$, so that its value for $t\geq 1$ is not larger than that corresponding to $t=1$, and the proof is concluded.

\par\bigskip
{\bf Case $0< t\leq 1$.}
First of all we observe that, by the already mentioned inclusion relations of modulation spaces, we can limit ourselves to prove the desired result with $p_2$ replaced by $\min\{p_1,p_2\}$ and $q_2$ replaced by $\min\{q_1,q_2\}$. Hence from now on $p_2\leq p_1$ and $q_2\leq q_1$. \par
Recall from Proposition \ref{prop shubin} that $H^\beta$ is a pseudodifferential operator with a real-valued Weyl symbol $a_\beta \in \Gamma^{2\beta}$. Moreover, the machinery of the heat kernel of pseudodifferential operators applies (see e.g.\ \cite[Thm.\ 4.5.1]{NR}, the global ellipticity assumption being satisfied in view of the structure \eqref{eq prop shubin} of $a_\beta$), and the associated heat semigroup $e^{-tH^\beta}$ is therefore a pseudodifferential operator with Weyl symbol $b_t(x,\xi)$, depending on the parameter $t$, such that, for every $N\geq 0$, the symbol $t^Nb_t$ belongs to a bounded subset of $\Gamma^{-2\beta N}$ when $t$ stays in any compact subset of $[0,+\infty)$.\par Now, fix $N\in\mathbb{N}$ such that $2\beta N> 2d$; we can apply Theorem \ref{continuity} to the symbols $b_t$ and $t^N b_t$ and we obtain, with $m(x,\xi)=v_{2\beta}(x,\xi)=(1+|x|+|\xi|)^{2\beta}$,
\[
\| (1+t^Nm^N) V_g(e^{-tH^\beta} f)\|_{L^{p_1,q_1}}\leq C \|f\|_{M^{p_1,q_1}}
\]
for a constant $C$ independent of $t\in (0,1]$. Hence it is sufficient to prove that 
\[
\|F\|_{L^{p_2,q_2}}\leq C(t) \| (1+t^N m^N) F\|_{L^{p_1,q_1}}
\]
with $C(t)$ as in the statement and for every measurable function $F(x,\xi)$. This follows by H\"older's inequality since, under our assumption, $1/p_2=1/p_1+1/\tilde{p}$ and  $1/q_2=1/q_1+1/\tilde{q}$ and
\[
\|(1+t^N m(x,\xi)^N)^{-1}\|_{L^{\tilde{p},\tilde{q}}}\leq \|(1+t^N(|x|+|\xi|)^{2\beta N})^{-1}\|_{L^{\tilde{p},\tilde{q}}}=C(t)
\]
as one sees by a linear change of variable. This proves the desired estimate in the regime $t\in (0,1]$.
  \end{proof}
\begin{remark}
We highlight that the estimate \eqref{uno} for $p_1=p_2=q_1=q_2$ can be improved as follows: 
\begin{equation}\label{unobis}
\|P_k\|_{M^{p,p}\to M^{p,p}}\leq 1,\qquad k\in\mathbb{N},\ p\geq 1,
\end{equation}
for a suitable choice of the window implicit in the definition of the $M^{p,p}$-norm.
In fact, for $k\in\mathbb{N}$ we can write
\begin{align*}
P_k&=(2\pi)^{-1}\sum_{\ell =0}^{+\infty}  \left( \int_0^{2\pi} e^{-i\theta(2\ell+d)} e^{i\theta(2k+d)}\,d\theta \right)  \, P_\ell\\
&= (2\pi)^{-1} \int_0^{2\pi} e^{-i\theta H} e^{i\theta(2k+d)}\, d\theta,
\end{align*}
where the interchange of the sum and the integral is justified by Fubini Theorem, after interpreting the above equalities in weak sense and using that for $f,g\in\cS(\rd)$ one has 
\[
\sum_{\ell=0}^{+\infty} |\langle P_\ell  f,g\rangle|= \sum_{\ell=0}^{+\infty} |\langle P_\ell  f,P_\ell g\rangle|\leq \|f\|_{L^2} \|g\|_{L^2}.
\]
The estimate \eqref{unobis} then follows at once from the fact that 
\[
\|e^{-i\theta H}\|_{M^{p,p}\to M^{p,p}}=1,\qquad \theta\in\mathbb{R},
\]
for a suitable choice of the window implicit in the definition of the $M^{p,p}$-norm; see the proof of \cite[Thm.\ 1.7]{bhimani2}. \par
This argument fails for $M^{p,q}$ with $p\not=q$, because in that case $e^{-i\theta H}$ is not bounded on $M^{p,q}$ (except for special values of $\theta$).\par
\end{remark}
\section{The nonlinear heat equation for $H^\beta$}
In this section we prove Theorem \ref{mt2}. We recall the following results \cite{F1,kb,wang dec}.

\begin{proposition}[Algebra property]\label{gap}  Let $m\in \mathbb N$, $m\geq1$.  Assume that $\sum_{i=1}^{m} \frac{1}{p_i}= \frac{1}{p_0}$, $\sum_{i=1}^{m} \frac{1}{q_i}= m-1+ \frac{1}{q_0}$ with $0<p_i\leq \infty, 1\leq q_i \leq \infty$  for $1\leq i \leq m.$ Then, for some $C>0$,
\[  \left \| \prod_{i=1}^{m} f_i \right\|_{M^{p_0, q_0}} \leq C  \prod_{i=1}^{m} \|f_i\|_{M^{p_i, q_i}}.\]
\end{proposition}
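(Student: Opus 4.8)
The plan is to transfer the problem to the time-frequency side, using the elementary fact that pointwise multiplication of functions corresponds to convolution of the short-time Fourier transforms in the frequency variable. Precisely, I would fix once and for all the Gaussian $\varphi(y)=e^{-|y|^2/2}$ and use it as window for every factor; then $\varphi^m\in\cS(\rd)\setminus\{0\}$, and a direct computation (collapsing the auxiliary frequency integrals via Fourier inversion) yields
\[
V_{\varphi^m}\!\Big(\prod_{i=1}^m f_i\Big)(x,\xi)=c_{m,d}\,\big(V_\varphi f_1(x,\cdot)\ast\cdots\ast V_\varphi f_m(x,\cdot)\big)(\xi),
\]
the convolutions being taken in $\xi$ with $x$ held fixed as a parameter. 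Since the modulation-space (quasi-)norm is independent of the window, it suffices to estimate the $L^{p_0,q_0}$-norm of the right-hand side — indeed of the convolution $F_1(x,\cdot)\ast\cdots\ast F_m(x,\cdot)$ of the moduli $F_i:=|V_\varphi f_i|$, which dominates it pointwise — by $\prod_{i=1}^m\|V_\varphi f_i\|_{L^{p_i,q_i}}=\prod_{i=1}^m\|f_i\|_{M^{p_i,q_i}}$.

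When $p_0\geq1$ this is routine. For fixed $\xi$, Minkowski's integral inequality applied to the $(m-1)$-fold integral defining the convolution, together with H\"older's inequality in $x$ (admissible because $\sum_i 1/p_i=1/p_0$), gives
\[
\big\|(F_1(x,\cdot)\ast\cdots\ast F_m(x,\cdot))(\xi)\big\|_{L^{p_0}_x}\leq(h_1\ast\cdots\ast h_m)(\xi),\qquad h_i(\eta):=\|V_\varphi f_i(\cdot,\eta)\|_{L^{p_i}_x}.
\]
Taking the $L^{q_0}_\xi$-norm and applying Young's convolution inequality for $m$ functions — whose hypothesis is exactly $\sum_i 1/q_i=m-1+1/q_0$ together with $q_i\geq1$ — produces the bound $\prod_i\|h_i\|_{L^{q_i}}=\prod_i\|f_i\|_{M^{p_i,q_i}}$, which is the claim. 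This already covers the full range of exponents used in Theorems \ref{mt2} and \ref{LW}.

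It remains to treat the genuinely quasi-Banach case $0<p_0<1$ (which can occur, e.g., when $m=2$ and $p_1=p_2=1$), where Minkowski's inequality is no longer available. Here I would pass to the equivalent description of $M^{p,q}$ through the frequency-uniform (``isometric'') decomposition $\{\square_n\}_{n\in\zd}$, namely $\|f\|_{M^{p,q}}\asymp\big\|\big(\|\square_n f\|_{L^p}\big)_{n\in\zd}\big\|_{\ell^q}$, valid for all $0<p,q\leq\infty$. Expanding $\prod_i f_i=\sum_{n^{(1)},\dots,n^{(m)}}\prod_i\square_{n^{(i)}}f_i$ and noting that $\prod_i\square_{n^{(i)}}f_i$ has Fourier support within a bounded distance of $n^{(1)}+\cdots+n^{(m)}$, one controls $\square_k(\prod_i f_i)$ by the sum of the pieces with $n^{(1)}+\cdots+n^{(m)}$ close to $k$; then H\"older's inequality in $x$ and a discrete convolution/Young inequality in the frequency multi-indices give the estimate in the form $\big\|\big(\|\square_k(\prod_i f_i)\|_{L^{p_0}}\big)_k\big\|_{\ell^{q_0}}\lesssim\prod_i\|f_i\|_{M^{p_i,q_i}}$, along the lines of \cite{kb,wang dec}. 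This last case is the only delicate point: the failure of the triangle inequality for $L^{p_0}$ with $p_0<1$ forces one to abandon the clean STFT-convolution computation in favour of the bookkeeping on the decomposition side.
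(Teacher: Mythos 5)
Your Banach-range argument ($p_0\geq 1$) is correct and is essentially the standard proof: the identity $V_{\varphi^m}(\prod_i f_i)(x,\xi)=c\,\big(V_\varphi f_1(x,\cdot)\ast\cdots\ast V_\varphi f_m(x,\cdot)\big)(\xi)$, then Minkowski, H\"older in $x$ and the $m$-fold Young inequality in $\xi$ reproduce exactly the stated index relations (modulo the routine justification of the identity for distributional $f_i$). Note that the paper itself does not prove Proposition \ref{gap}; it only cites \cite{F1,kb,wang dec}, so you are attempting more than the authors do. One inaccuracy, though: your claim that the case $p_0\geq 1$ ``already covers the full range of exponents used in Theorems \ref{mt2} and \ref{LW}'' is not correct as the paper uses the proposition. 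The proof of Lemma \ref{Algebra} applies it with $p_i=p$ and $p_0=p/(2k+1)$, which is $<1$ whenever $p<2k+1$ (e.g.\ $p=1$). One can stay in the Banach range only by rerouting, e.g.\ applying the proposition with $p_i=(2k+1)p$, $p_0=p$, and using the embedding $M^{p,q}\hookrightarrow M^{(2k+1)p,q}$ on each factor.

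The genuine gap is in your treatment of the quasi-Banach case $p_0<1$, which is precisely the delicate one. Since $L^{p_0}$ only satisfies the $p_0$-triangle inequality, controlling $\square_k\big(\prod_i f_i\big)$ by the pieces with $n^{(1)}+\cdots+n^{(m)}$ close to $k$ forces an $\ell^{p_0}$-type sum over the frequency multi-indices: the natural implementation of your sketch gives
\[
\Big\|\square_k\Big(\prod_i f_i\Big)\Big\|_{L^{p_0}}\lesssim\Big(\sum_{n^{(1)}+\cdots+n^{(m)}\approx k}\ \prod_i\big\|\square_{n^{(i)}}f_i\big\|_{L^{p_i}}^{p_0}\Big)^{1/p_0},
\]
so after taking the $\ell^{q_0}_k$ norm the discrete Young inequality is applied to the sequences $\big(\|\square_n f_i\|_{L^{p_i}}^{p_0}\big)_n\in\ell^{q_i/p_0}$ and requires $\sum_i p_0/q_i\geq m-1+p_0/q_0$, which for $p_0<1$ is strictly stronger than the hypothesis $\sum_i 1/q_i=m-1+1/q_0$. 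Concretely, for $m=2$, $p_1=p_2=q_1=q_2=1$ (so $p_0=1/2$, $q_0=1$) this route only yields a bound in terms of $\|f_1\|_{M^{1,1}}\,\|f_2\|_{M^{1,1/2}}$, i.e.\ it needs one of the sequences $\|\square_n f_i\|_{L^1}$ in $\ell^{1/2}$ rather than $\ell^1$. So ``a discrete convolution/Young inequality in the frequency multi-indices'' does not give the estimate with the stated exponents; the statement itself is the one recorded in the literature (heuristically, H\"older acts on the $p$-indices and Young only on the $q$-indices, where $q_i\geq1$ is assumed), but proving it for $p_0<1$ requires a different mechanism — e.g.\ Gabor/atomic decompositions exploiting the Gaussian off-diagonal decay of products of coherent states, or the majorization techniques developed for quasi-Banach modulation spaces in \cite{galperin,toft} — or else one simply quotes the cited references, as the paper does.
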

\begin{lemma} \label{Algebra}
		The multi-linear estimates
		\[
		\||f|^{2k}f\|_{M^{p,r}}\leq C \|f\|^{2k+1}_{M^{p,q}}		\]	
		hold for $1\leq p,q,r \leq \infty,  k\in \mathbb N$, $\frac{2k+1}{q}=\frac{1}{r}+2k$.
	\end{lemma}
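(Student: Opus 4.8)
The plan is to obtain the estimate as an immediate consequence of the algebra property, Proposition \ref{gap}, after a convenient choice of exponents and one elementary embedding. First I would regard the nonlinearity as a pointwise product of $m\coloneqq 2k+1$ factors, namely $|f|^{2k}f=f^{\,k+1}\overline{f}^{\,k}$; since $\|\overline{f}\|_{M^{s,q}}=\|f\|_{M^{s,q}}$ for every $s\in(0,\infty]$ (which is clear once the modulation norm is computed with a real-valued window, an admissible choice in view of the window-independence of the norm), Proposition \ref{gap} applies verbatim to this product.

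Concretely, I would apply Proposition \ref{gap} with $m=2k+1$ and with $p_1\coloneqq p$, $p_2=\cdots=p_m\coloneqq\infty$, $q_1=\cdots=q_m\coloneqq q$. The required index identities hold: $\sum_{i=1}^m 1/p_i=1/p$, so the output exponent in the first slot is $p_0=p$; and $\sum_{i=1}^m 1/q_i=(2k+1)/q=2k+1/r=(m-1)+1/q_0$ precisely by the hypothesis $(2k+1)/q=1/r+2k$, so $q_0=r$. The side conditions of Proposition \ref{gap} are met ($0<p_i\le\infty$, $1\le q_i\le\infty$; note also that $r\ge1$ in the hypothesis forces $q\le(2k+1)/(2k)$, i.e.\ $1/q_0\in[0,1]$, as needed). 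This yields
\[
\big\||f|^{2k}f\big\|_{M^{p,r}}\ \le\ C\,\|f\|_{M^{p,q}}\prod_{i=2}^{m}\|f\|_{M^{\infty,q}}\ =\ C\,\|f\|_{M^{p,q}}\,\|f\|_{M^{\infty,q}}^{\,2k}.
\]

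To finish, I would use the inclusion $M^{p,q}\hookrightarrow M^{\infty,q}$ — valid for every $p\le\infty$, since $M^{p,q}$ increases when $p$ increases — to replace each factor $\|f\|_{M^{\infty,q}}$ by $\|f\|_{M^{p,q}}$, arriving at $\||f|^{2k}f\|_{M^{p,r}}\le C\|f\|_{M^{p,q}}^{\,2k+1}$.

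I do not anticipate a substantive obstacle: the whole argument amounts to matching indices to the hypotheses of Proposition \ref{gap} and invoking a trivial embedding. If one instead wished to argue from scratch, the natural route is to start from the identity $V_{\phi^m}(f_1\cdots f_m)(x,\xi)=(2\pi)^{-d(m-1)}\big(V_\phi f_1(x,\cdot)\ast\cdots\ast V_\phi f_m(x,\cdot)\big)(\xi)$, take the $L^p_x$ norm and move it inside the $\xi$-convolution via Minkowski's integral inequality, bound the pointwise product of the $V_\phi f_i(\cdot,\eta_i)$ by placing $L^p_x$ on one factor and $L^\infty_x$ on the other $m-1$, and then apply Young's convolution inequality in $\xi$ with the exponents forced by $1/r=m/q-(m-1)$, again combined with $M^{p,q}\hookrightarrow M^{\infty,q}$. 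In that self-contained proof the only mildly delicate step is the Minkowski integral inequality, which is exactly the reason the hypotheses require $p,q\ge1$.
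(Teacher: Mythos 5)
Your proof is correct and follows essentially the same route as the paper: both reduce the estimate to Proposition \ref{gap} together with one elementary inclusion of modulation spaces. The only (immaterial) difference is the bookkeeping: the paper takes all $p_i=p$, so the product lands in $M^{p_0,r}$ with $\frac{1}{p_0}=\frac{2k+1}{p}$ and one then embeds $M^{p_0,r}\hookrightarrow M^{p,r}$, whereas you put $2k$ factors in $M^{\infty,q}$ and instead use $M^{p,q}\hookrightarrow M^{\infty,q}$ on the factors.
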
	
\begin{proof} From Proposition \ref{gap} we have
	\[ \||f|^{2k}f\|_{M^{p_0,r}} \leq C \|f\|_{M^{p,q}}^{2k+1}, \] with $\frac{2k+1}{p} = \frac{1}{p_0}$ and the conclusion follows from the embedding $M^{p_0,r} \hookrightarrow M^{p,r}$ ($p_0\leq p$).
\end{proof}

\begin{proof}[\textbf{Proof of Theorem \ref{mt2}}]
		{\it (a)} We study \eqref{HE} directly in integral form (Duhamel's principle), namely
				\begin{eqnarray} \label{HE1}
		u(t)=S(t)u_0+ \lambda \int_0^t S(t-\tau) \, [|u(\tau)|^{2k}u(\tau)] \, d \tau=:\mathcal{J}(u)
		\end{eqnarray}
	where $S(t)=e^{-t H^{\beta}}.$\par Let $p,q$ be as in the statement. 
By Theorem \ref{mainthm} we have, for $t\geq0$, 
\begin{equation}\label{14bis}
\|S(t) f\|_{M^{p,q}}\leq C_1 \|f\|_{M^{p,q}}
\end{equation} 
for some $C_1>0$ and, for $r\geq q$,
		\begin{equation*}\label{eq2}
		\|S(t) f\|_{M^{p,q}}\leq C(t)\|f\|_{M^{p,r}},
		\end{equation*}
		where
		\begin{equation*}
		C(t)=\begin{cases}
		C_0 e^{-td^\beta} & t\geq 1\\
		C_0 t^{-\sigma}& 0<t\leq 1
		\end{cases}
		\end{equation*}
		for some $C_0>0,$ with $\sigma=\frac{d}{2\beta}\left(\frac{1}{q}-\frac{1}{r}\right).$  Since $1\leq q \leq \frac{2k+1}{2k}$ we can  choose $r\in [1, \infty]$ such that $\frac{2k+1}{q}=\frac{1}{r}+2k$. 		
		 Since $ \frac{1}{q}+ \frac{\beta}{kd}>1,$ we have $\sigma<1$.
By Minkowski's inequality for integrals and Lemma \ref{Algebra}, we obtain, for some constants $C_2,C_3>0$,
		\begin{eqnarray*}\label{eq3}
		&&\hspace{-.5in}\left\|\int_0^t S(t-\tau) \, [|u(\tau)|^{2k} u(\tau)] \, d \tau \right\|_{ M^{p,q}} \nonumber \\
		&& \leq \int_{0}^t \left\| S(t-\tau) \, [|u(\tau)|^{2k} u(\tau)] \right\|_{M^{p,q}} \, d \tau \nonumber \\
		&& \leq \int_{0}^t \, C(t-\tau) \, \| |u(\tau)|^{2k}u(\tau) \|_{M^{p,r}} \, d \tau \nonumber \\
		&& \leq C_2  \int_0^t C(t-\tau) \|u(\tau)\|_{M^{p,q}}^{2k+1} d\tau \nonumber \\
		&&  \leq  C_2 \|u\|^{2k+1}_{L^{\infty}([0,t], M^{p,q})} \int_0^t C(s) ds \leq C_3 \, \|u\|^{2k+1}_{L^{\infty}([0,+\infty), M^{p,q})}. 
		\end{eqnarray*}
Combining this inequality with \eqref{14bis}, we have
		\begin{eqnarray} \label{eq4}
		\|\mathcal{J}(u)\|_{L^{\infty}([0,+\infty), M^{p,q})} &\leq& C_4 \big(  \|u_0\|_{M^{p,q}}+  \|u\|^{2k+1}_{L^{\infty}([0,+\infty), M^{p,q})}\big)
		\end{eqnarray}
		for some constant $C_4>0$.
For $\varepsilon>0,$ put \[
B_{\varepsilon}=\{u \in L^{\infty}([0,+\infty), M^{p,q}): \|u\|_{L^{\infty}([0,\infty), M^{p,q})} \leq \varepsilon \},\]
which is the closed ball of radius $\varepsilon,$ centred at the origin in $L^{\infty}([0,+\infty), M^{p,q}).$ Next, we show that the mapping $\mathcal{J}$ maps $B_{\varepsilon}$ into itself for suitable choice of $\varepsilon$. 
		Now, if we assume $\|u_0\|_{M^{p,q}} \leq \frac{\varepsilon}{2 \, C_4}$ then 
		from \eqref{eq4} we obtain for $u \in B_{\varepsilon}$
		\begin{eqnarray*}
			\|\mathcal{J}(u)\|_{L^{\infty}([0,+\infty), M^{p,q})} \leq \frac{\varepsilon}{2}+ C_4 \varepsilon^{2k+1}.
		\end{eqnarray*}
		Since $k>0$, we can choose $\varepsilon$ such that $\varepsilon^{2k} \leq \frac{1}{2 \, C_4}$ and as a consequence we have
		\begin{eqnarray*}
			\|\mathcal{J}(u)\|_{L^{\infty}([0,+\infty), M^{p,q})} \leq \frac{\varepsilon}{2} + \frac{\varepsilon}{2}=\varepsilon,
		\end{eqnarray*}
		that is, $\mathcal{J}(u) \in B_{\varepsilon}.$ 
Noticing the identity
\[|u|^{2k}u-|v|^{2k}v= (u-v) |u|^{2k} + v(|u|^{2k}-|v|^{2k}) \]		
	 and  exploiting similar arguments as before, we obtain 
		\begin{eqnarray*}
			\|\mathcal{J}(u)- \mathcal{J}(v)\|_{L^{\infty}([0,+\infty), M^{p,q})} \leq  \frac{1}{2} \, \| u-v\|_{L^{\infty}([0,+\infty), M^{p,q})},
		\end{eqnarray*}
possibly by taking $\varepsilon$ smaller. Therefore, using Banach's contraction principle, we conclude that $\mathcal{J}$ has a unique fixed point in $B_{\varepsilon}$ which is the solution of \eqref{HE1}.\par\bigskip

{\it (b)} Let us now show that when $p<\infty$ ($q<\infty$ because of the assumption $q\leq \frac{2k+1}{2k}$) the unique solution in $L^\infty([0,+\infty),M^{p,q})$ in fact is continuous in $t$, i.e.\ belongs to $C([0,+\infty),M^{p,q}$).
It is sufficient to repeat the above contraction argument with the space $C([0,+\infty),M^{p,q})$ in place of  $L^\infty([0,+\infty),M^{p,q})$, provided that the semigroup $S(t)$ is strongly continuous on $M^{p,q}$. To this end, observe that by \eqref{14bis} it is sufficient to prove that the map $t\mapsto S(t)f$ is continuous with values in $M^{p,q}$ for every $f$ in some dense subset of $M^{p,q}$. \par
Then, let us take $f\in \mathcal{S}(\mathbb{R}^d)$. We know that the semigroup $S(t)$ is strongly continuous on $L^2$. Hence for every $k\in\mathbb{N}$, the map $t\mapsto S(t) H^k  f=H^k S(t) f$ is continuous with values in $L^2$. But the seminorms $p_k(f)\coloneqq \|H^k f\|_{L^2}$, $k\in\mathbb{N}$, define an equivalent family of seminorms for $\mathcal{S}(\mathbb{R}^d)$, see  \cite{NR}. Hence the map $t\mapsto S(t) f$ is continuous with values in $\mathcal{S}(\mathbb{R}^d)$ and a fortiori when regarded as a map valued in $M^{p,q}$.		
\par\bigskip	

{\it (c)} We	shall now  prove the desired decay of the solution. By Theorem \ref{mainthm} we have, for $t\geq0$,
		\begin{eqnarray*}
			\|S(t)f \|_{M^{p,q}}\leq C_0 \, e^{-td^\beta} \|f\|_{M^{p,q}}
		\end{eqnarray*}
		for some $C_0>0$. 
		Thus, we have 
		\begin{eqnarray} \label{15bis}
		\|S(t)f\|_{X} \leq C_0 \,  \|f\|_{M^{p,q}}.
		\end{eqnarray}
		We already know from part {\it (a)} that 
		\begin{equation*}\label{eq6}
		 \hspace{-.5in} e^{td^{\beta}}\left\|\int_0^t S(t-\tau) \, [|u(\tau)|^{2k} u(\tau)] \, d \tau \right\|_{M^{p,q}} \nonumber 
		 \leq  C_1 e^{td^{\beta}} \int_0^tC(t-\tau) \| u(\tau)\|^{2k+1}_{M^{p,q}} d\tau
				\end{equation*}
for some $C_1>0$. To control the above integral we divide it  into two parts.  For $t\geq 0$		we		let $E_1= \{ \tau\in [0,t]: t-\tau <1\}$ and $E_2= \{ \tau\in [0,t]: t-\tau \geq 1\}$. Note that 
		\[e^{\tau d^\beta} \|u(\tau)\|_{M^{p,q}}^{2k+1}= e^{-2k\tau d^\beta} (e^{\tau d^\beta}\|u(\tau)\|_{M^{p,q}})^{2k+1}.\] Hence, for some $C_2>0$,
\begin{eqnarray*}
e^{td^{\beta}}\int_{E_1} C(t-\tau) \| u(\tau)\|^{2k+1}_{M^{p,q}} d\tau & \leq & C_2 \, e^{td^{\beta}} \int_{E_1} (t-\tau)^{-\sigma} e^{-\tau d^{\beta}} e^{-2k \tau d^{\beta}} \|u\|_{X}^{2k+1} d\tau\\
& \leq & C_2 \, \|u\|_{X}^{2k+1} e^{d^{\beta}} \int_0^1 s^{-\sigma}  ds 
\end{eqnarray*}
and, since $k>0$,
\begin{eqnarray*}
e^{td^{\beta}}\int_{E_2} C(t-\tau) \| u(\tau)\|^{2k+1}_{M^{p,q}} d\tau & \leq & C_2 \, e^{t d^{\beta}} \int_{E_2} e^{- (t- \tau) d^{\beta}} \|u(\tau)\|_{M^{p,q}}^{2k+1} d\tau\\
& \leq & C_2 \, \|u\|_{X}^{2k+1} \int_{0}^{+\infty} e^{-2k \tau d^\beta} d\tau.
\end{eqnarray*}
Combining these inequalities with \eqref{15bis} yields
		\begin{equation*} \label{eq7}
		\|\mathcal{J}(u)\|_{X} \leq C_3 ( \|u_0\|_{M^{p,q}}+  \|u\|^{2k+1}_{X})
		\end{equation*}
for some constant $C_3>0$. Now, repeating  similar arguments as before gives the desired result.	\par\bigskip
{\it (d)} The proof of the global well-posedness in $W^{q,p}(\mathbb R^d)$ goes as that above for the modulation spaces $M^{p,q}$. We can replace indeed $M^{p,q}$ with $W^{q,p}$ everywhere, using the algebra properties of $W^{q,p}$, analogous to Lemma \ref{Algebra}, that are \[
 \||f|^{2k}f\|_{W^{r,p}}\leq C \|f\|^{2k+1}_{W^{q,p}}
		\]
		for $1\leq p,q,r \leq \infty,  k\in \mathbb N$, $\frac{2k+1}{q}=\frac{1}{r}+2k$,
		which are in turn a consequence of the {\it convolution} properties for modulation spaces \cite{toft contp} (the Fourier transform turns convolution into pointwise multiplication and modulation spaces into Wiener amalgam spaces). 

\end{proof}

\section*{Acknowledgments} \noindent D.\  G.\ B.\ is thankful to Henri Lebesgue Centre (IRMAR, Univ.\ Rennes 1) for the financial support and the excellent research facilities. D.\ G.\ B.\ is also thankful to DST-INSPIRE and TIFR CAM for the academic leave. \\ R.\ M.\ is thankful to Indian Institute of Science (C.V.\ Raman PDF, R(IA)CVR-PDF/2020/224) for the financial support and the excellent research facilities. \\ S.\ T.\ is supported by J.\ C.\ Bose Fellowship from D.\ S.\ T.,
Government of India. \\ F.\ N.\ and S.\ I.\ T.\ are members of the Gruppo Nazionale per l'Analisi Matematica, la Probabilit\`a e le loro Applicazioni (GNAMPA) of the Istituto Nazionale di Alta Matematica (INdAM). The present research was partially supported by MIUR grant “Dipartimenti di Eccellenza” 2018–2022, CUP: E11G18000350001, DISMA, Politecnico di Torino.


\begin{thebibliography}{99}
\bibitem{bgor unimod} \'A. B\'enyi, K. Gr\"ochenig, K. A. Okoudjou and L. G. Rogers. Unimodular Fourier multipliers for modulation spaces.\textit{ J. Funct. Anal.} \textbf{246} (2007), no. 2, 366--384. 	

\bibitem{BOP} \'A. B\'enyi, T. Oh and O. Pocovnicu. On the probabilistic Cauchy theory for nonlinear dispersive PDEs. In  \textit{Landscapes of Time-Frequency Analysis}, 1--32, Appl. Numer. Harmon. Anal., Birkh\"auser/Springer, Cham, 2019. 

\bibitem{BO} \'A. B\'enyi and T. Oh. Modulation spaces, Wiener amalgam spaces, and Brownian motions. \textit{Adv. Math.} \textbf{228} (2011), no. 5, 2943--2981. 
	
\bibitem{kb} \'A. B\'enyi and K. A. Okoudjou. Local well-posedness of nonlinear dispersive equations on modulation spaces. {\it Bull. Lond. Math. Soc. } \textbf{41} (3), 549-558, 2009.

\bibitem{bhimani1}  D. G. Bhimani. The nonlinear heat equations with fractional Laplacian \& harmonic oscillator in modulation spaces. Preprint, arXiv:1911.01844, 2019.

\bibitem{bhimani2} D. G. Bhimani, R. Balhara and S. Thangavelu. Hermite multipliers on modulation spaces. In \textit{Analysis and partial differential equations: perspectives from developing countries}, 42--64, Springer Proc. Math. Stat., 275, Springer, Cham, 2019.

\bibitem{bhimani3} D.\ G.\ Bhimani, M.\ Grillakis and K.\ A.\ Okoudjou. The Hartree-Fock equations in modulation spaces. To appear in {\it Comm. Partial Differential Equations}, https://doi.org/10.1080/03605302.2020.1758721, 2020.

\bibitem{chen} J. Chen, Y. Ding, Q. Deng and D. Fan. Estimates on fractional power dissipative equations in function spaces. \textit{Nonlinear Analysis: Theory, Methods and Applications} \textbf{75} (2012), 2959--2974.

\bibitem{EC} E. Cordero. On the local well-posedness of the nonlinear heat equation associated to the fractional Hermite operator in modulation spaces. Preprint, arXiv:2007.07272v3, 2020.

		
\bibitem{CNR} E. Cordero, F. Nicola and L. Rodino. 
Schr\"odinger equations with rough Hamiltonians. 
\textit{Discrete Contin. Dyn. Syst.} \textbf{35} (2015), no. 10, 4805–4821.

		
\bibitem{CR} E. Cordero and L. Rodino. \textit{Time-frequency Analysis of Operators},  De Gruyter, 2020. 

		
\bibitem{F0} H. G. Feichtinger. Banach convolution algebras of Wiener type. \textit{Functions, series, operators, Vol. I, II (Budapest, 1980)}, 509--524, \textit{Colloq. Math. Soc. J\'anos Bolyai}, \textbf{35}, North-Holland, Amsterdam, 1983. 
		
\bibitem{F1}  H. G. Feichtinger.
Modulation spaces on locally compact abelian groups. {\em Technical Report, University Vienna, 1983,} and also in
{\em Wavelets and Their Applications}, M. Krishna, R. Radha,  S. Thangavelu (editors), 99--140, Allied Publishers, 2003.
			
\bibitem{folland} G. B. Folland. {\it Harmonic Analysis in Phase Space}. Princeton University Press, 1989.
				
\bibitem{ng} N. Garofalo.  Fractional thoughts. Preprint, arXiv:1712.03347, 2017.
		
\bibitem{galperin} Y. V. Galperin, S. Samarah.  Time-frequency analysis on modulation spaces $M^m_{p,q}$, $0<p,q\leq\infty$. \textit{ Appl. Comput. Harmon. Anal.}, {\bf 16}(1) (2004), 1--18.
	
\bibitem{gramchev}  T. Gramchev, S. Pilipovi\'c, L. Rodino. Classes of degenerate elliptic operators in Gelfand-Shilov spaces. In \textit{New developments in pseudo-differential operators}, 15--31, Oper. Theory Adv. Appl., 189, Birkh\"auser, Basel, 2009.
		
		
\bibitem{gro book}  K. Gr\"ochenig. {\it Foundations of Time-Frequency Analysis}. Birkh\"auser Boston, Inc., Boston, MA, 2001. 
		
\bibitem{helffer} B. Helffer. \textit{Th\'eorie Spectrale pour des Op\'erateurs Globalement Elliptiques}. Ast\'erisque, 112. Soci\'et\'e Math\'ematique de France, Paris, 1984.
		
\bibitem{FD}	Q. Huang,  D. Fan and J. Chen. Critical exponent for evolution equations in modulation spaces. {\it J. Math. Anal. Appl. }  \textbf{443} (2016), no. 1, 230--242. 
		
		
\bibitem{ib} T. Iwabuchi. Navier-Stokes equations and nonlinear heat equations in modulation spaces with negative derivative indices. {\it J. Differential Equations} \textbf{248} (2010), no. 8, 1972--2002.		
		
\bibitem{kob} M. Kobayashi and M. Sugimoto. The inclusion relation between Sobolev and modulation spaces. \textit{J. Funct. Anal.} \textbf{260} (2011), no. 11, 3189--3208.
	
\bibitem{kt} H. Koch and D. Tataru. $L^p$ eigenfunction bounds for the Hermite operator. \textit{Duke Math. J.} \textbf{128} (2005), no. 2, 369--392.  	
		
\bibitem{NL} N. Laskin. Fractional Schr\"odinger equation. {\it Phys. Rev. E} \textbf{66} (2002), no. 5, 056108.
		
\bibitem{RM} R. Manna. The Cauchy problem for non-linear higher order Hartree type equation in modulation spaces.  \textit{J. Fourier Anal. Appl.} \textbf{25} (2019), 1319--1349.
		
\bibitem{ms} N. Mizoguchi  and P. Souplet. Optimal condition for blow-up of the critical $L^q$ norm for the semilinear heat equation. {\it Adv. Math.} \textbf{335} (2019), 106763.
		
\bibitem{nicola} F. Nicola. Phase space analysis of semilinear parabolic equations. \textit{J. Funct. Anal.} \textbf{267} (2014), no. 3, 727--743.
		
\bibitem{NR} F. Nicola and L. Rodino. \textit{Global Pseudo-differential Calculus on Euclidean Spaces}. Birkh\"auser Verlag, Basel, 2010. 
		
\bibitem{pierf} V. Pierfelice. Strichartz estimates for the Schr\"odinger and heat equations perturbed with singular and time dependent potentials. \textit{Asymptot. Anal.} \textbf{47} (2006), no. 1-2, 1--18. 	
			
\bibitem{RSW} M. Ruzhansky, M.  Sugimoto and B. Wang.  Modulation spaces and nonlinear evolution equations. In {\it Evolution equations of hyperbolic and Schr\"odinger type}, 267-283, 
Progr. Math., 301, Birkh\"auser/Springer Basel AG, Basel, 2012.		
		
\bibitem{shubin} M. A. Shubin. \textit{Pseudodifferential Operators and Spectral Theory.} Springer Series in Soviet Mathematics. Springer-Verlag, Berlin, 1987. 
		
\bibitem{tao} T. Tao. {\it Nonlinear Dispersive Equations: Local and Global Analysis}. CBMS regional conference series in mathematics. Amer. Math. Soc., 2006.
		
\bibitem{tataru} D. Tataru.  Phase space transforms and microlocal analysis. In \textit{Phase space analysis of partial differential equations. Vol. II}, 505--524, Pubbl. Cent. Ric. Mat. Ennio Giorgi, Scuola Norm. Sup., Pisa, 2004. 
		
	
	\bibitem{toft} J. Toft. Continuity and compactness for pseudo-differential operators with symbols in quasi-Banach spaces or H\"ormander classes. \textit{Anal. Appl. (Singap.)} \textbf{15} (2017), no. 3, 353--389.
	
	\bibitem{toft contp} J. Toft. Continuity properties for modulation spaces, with applications to pseudo-differential calculus. I.\textit{ J. Funct. Anal.} \textbf{207} (2004), no. 2, 399--429. 
		
\bibitem{st} S. Thangavelu. {\it Lectures on Hermite and Laguerre Expansions}. Princeton University Press, Princeton, NJ, USA, 1993.	
	
	\bibitem{st1} S. Thangavelu.  A note on fractional powers of the Hermite operator. Preprint, arXiv:1801.08343, 2018.
	
	
	\bibitem{vas} J. L. V\'azquez. The mathematical theories of diffusion: nonlinear and fractional diffusion. In \textit{Nonlocal and Nonlinear Diffusions and Interactions: New Methods and Directions}, M. Bonforte and G. Grillo (Editors).  Lecture Notes in Mathematics {\bf 218} (2017), 205--278. Springer, Cham.

\bibitem{wang240} B. Wang. Exponential Besov spaces and their applications to certain evolution equations with dissipation. {\it Comm. Pure Apple. Anal.} {\bf 3} (4) (2004), 883--919. 
		
\bibitem{wang dec} B. Wang, L. Zhao and B. Guo. Isometric decomposition operators, function spaces $E^\lambda_{p,q}$ and applications to nonlinear evolution equations. \textit{J. Funct. Anal.} \textbf{233} (2006), no. 1, 1--39. 
		
\bibitem{wang} B. Wang, Z. Huo, C. Hao and Z. Guo. \textit{Harmonic Analysis Method for Nonlinear Evolution Equations. I.} World Scientific Publishing Co. Pte. Ltd., Hackensack, NJ, 2011. 

\end{thebibliography}
\end{document}